\newtheorem{observation}[theorem]{Observation}
\DeclarePairedDelimiter{\ceil}{\lceil}{\rceil}
\newcommand*\samethanks[1][\value{footnote}]{\footnotemark[#1]}
\begin{document}
\title{Star-Forest Decompositions of Complete
Graphs}

\author{Todor Anti\'c \inst{1} \thanks{The research was conducted during a scholarship provided by Vi\v segrad Fund.}\and
Jelena Gli\v{s}i\'c \inst{1} \samethanks\and
Milan Milivoj\v{c}evi\'{c} \inst{1,2}}

\authorrunning{T. Anti\'{c} et al.}

\institute{Faculty of Mathematics and Physics, Charles University
\\ \email{todor@kam.mff.cuni.cz, jglisic@matfyz.cz, milivojcevic@matfyz.cz}\and
Faculty of Mathematics, Natural Sciences and Information Technologies, University of Primorska
}

\maketitle            

\begin{abstract}  
We deal with the problem of decomposing a complete geometric graph into plane star-forests. In particular, we disprove a recent conjecture by Pach, Saghafian and Schnider by constructing for each $n$ a complete geometric graph on $n$ vertices which can be decomposed into $\frac{n}{2}+1$ plane star-forests. Additionally we prove that for even $n$, every decomposition of complete abstract graph on $n$ vertices into $\frac{n}{2}+1$ star-forests is composed of a perfect matching and $\frac{n}{2}$ star-forests with two edge-balanced components, which we call broken double stars.

\end{abstract}

\section{Introduction}

A classic question asked in graph theory is the following:
\say{Given a graph $G$, what is the minimal number of subgraphs with property $P$ that the edges of $G$ can be partitioned into?}
Historically, this question was asked for abstract graphs and property $P$ was replaced with forests, trees, complete bipartite graphs and many more \cite{akbari,lonc,shyu}.
Similar questions can be asked about graphs drawn in the plane or on any other surface. Here we want to decompose a complete graph on the surface into subgraphs that have a certain geometric property in addition to the property $P$. Answering such questions is a similar, but separate research direction that has been pursued by many authors in discrete geometry and graph drawing communities.

A \emph{geometric} graph is a graph drawn in the plane, with vertices represented by points in general position and edges as straight line segments between them. 

Recently, there has been a lot of work done on decomposing geometric graphs into planar subgraphs of a special kind, such as trees, stars, double stars  etc. \cite{HazimMichan,Bose}. This paper will be concerned with decomposing complete geometric graphs into plane star-forests. 
A \emph{star} is a connected graph on $k$ vertices with one vertex of degree $k-1$ (center) and $k-1$ vertices of degree $1$. Our definition allows a graph that has two vertices and a single edge to be a star but it is not clear what the center should be. In this case we define the center to be one of the endpoints of the edge, and this choice is arbitrary. The definition also allows that a single vertex is a star and in this case this vertex is also the center of said star. A \emph{star-forest} is a forest whose every connected component is a star. It is easy to observe that a complete graph $K_n$ can be decomposed into $n-1$ stars. Furthermore, $K_n$ cannot be decomposed into less than $n-1$ stars \cite{akiyama}. In the same paper, Akiyama and Kano proved that $K_n$ can be decomposed into at most $\lceil \frac{n}{2} \rceil+1$ star-forests and that this bound is tight.

The story is different for complete geometric graphs. 
To the best of our knowledge, the first mention of star-forest decompositions was made by Dujmovi\'{c} and Wood in \cite{dujmovic}. They asked if one can decompose a complete geometric graph on $n-1$ vertices, whose vertices form a convex polygon into less than $n-1$ star-forests. 

Recently, this question was ansewered in the negative by Pach, Saghafian and Schnider \cite{pach}. They showed that a complete geometric graph whose vertices form a convex polygon cannot be decomposed into fewer than $n-1$ plane star-forests. 
In the same paper, the authors posed the following question, which they considered the most important one in this direction. 

\begin{question}
    What is the minimal number of plane star-forests that a complete geometric graph can be decomposed into?
\end{question}

Based on their findings they made the following conjecture: 

\begin{conjecture}[\cite{pach}]\label{conj: wrong}
Let $n\ge1$. There is no complete geometric graph with $n$ vertices that can be decomposed into fewer than $\ceil{3n/4}$ plane star-forests.
\end{conjecture}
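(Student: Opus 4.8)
\medskip
\noindent\emph{Strategy.} To contradict Conjecture~\ref{conj: wrong} it suffices to exhibit, for a single $n$ (in fact for every even $n\ge 6$), a complete geometric graph on $n$ vertices that decomposes into $n/2+1$ plane star-forests, since $n/2+1<\lceil 3n/4\rceil$ for every even $n\ge 6$. Write $n=2m$. The plan is to first record an explicit abstract decomposition of $K_{2m}$ into $m+1$ star-forests and then place the $2m$ points so that every piece is drawn without crossings. The enabling observation is that a geometric star is always plane, because two straight segments sharing an endpoint never cross in their relative interiors. Consequently a matching drawn with straight edges is plane iff it is non-crossing, and a broken double star with centers $a,b$ is plane iff no segment from $a$ to one of its leaves crosses a segment from $b$ to one of its leaves; in particular it is plane whenever the vertex set of the star at $a$ and the vertex set of the star at $b$ are separated by a line, equivalently have disjoint convex hulls. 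So the whole problem reduces to a statement about a point set together with a non-crossing matching on it.

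\medskip
\noindent\emph{The decomposition and the point set.} Put the vertices in a cyclic order $1,2,\dots,2m$ (indices read mod $2m$), let $M=\{\{k,k+m\}:1\le k\le m\}$, and for $1\le k\le m$ let $F_k$ be the broken double star in which $k$ is joined to the arc $\{k+1,\dots,k+m-1\}$ and $k+m$ is joined to the complementary arc. A direct check gives $M\cup F_1\cup\dots\cup F_m=K_{2m}$ with every non-matching edge used exactly once, so this is a legitimate decomposition into $m+1$ star-forests. For the geometric realisation I would take two concentric regular $m$-gons -- a large outer one and a small, slightly rotated inner one -- and read the $2m$ points off in cyclic (radial) order, so that the odd-indexed vertices lie on the outer $m$-gon and the even-indexed ones on the inner $m$-gon. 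For $m=3$ this is a small equilateral triangle inside a large one, and with $M=\{14,25,36\}$ and $F_1=\{12,13,45,46\}$, $F_2=\{23,24,56,15\}$, $F_3=\{34,35,16,26\}$ one checks directly that all four star-forests are plane. Two things must be verified in general: (a) every arc of length $m$ has convex hull disjoint from the hull of its complementary arc, which makes each $F_k$ plane by the observation above; and (b) the chords $\{k,k+m\}$ are pairwise non-crossing, which makes $M$ plane.

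\medskip
\noindent\emph{Main obstacle and remaining cases.} The hard part is the tension between (a) and (b). In genuinely convex position every length-$m$ arc is already separated from its complementary arc, so (a) would be free; but then $M$ consists of $m$ ``long diagonals'' through the centre, which pairwise cross -- and indeed Pach, Saghafian and Schnider proved that a complete geometric graph in convex position needs $n-1$ star-forests. So the configuration must be pushed just far enough from convexity to un-cross $M$ while still keeping all the length-$m$ arc-hulls disjoint; making ``just far enough'' precise -- how small the inner radius, and the rotation, must be as functions of $m$ -- is the main technical work, and I expect it to rest on a careful but essentially routine estimate once the right quantities are isolated. A secondary point is parity: when $m$ is even the two endpoints of a chord $\{k,k+m\}$ land on the same $m$-gon, so $M$ again contains crossing diagonals; for those $n$ one must replace $M$ by a different non-crossing perfect matching (and correspondingly re-choose the broken double stars and re-verify that the result decomposes $K_{2m}$), or modify the configuration. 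The odd values of $n$ require a separate but analogous construction.
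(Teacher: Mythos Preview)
Your $m=3$ verification (two concentric triangles with a small rotation) is correct for a suitable choice of radii and rotation, and since $4<\lceil 18/4\rceil=5$ this single example already disproves Conjecture~\ref{conj: wrong}. The paper, incidentally, also first kills the conjecture with a six-point configuration before going on to the optimal bound.

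Where your write-up has a genuine gap is the passage from $m=3$ to all $m$. You correctly isolate the tension between (a) ``each pair of complementary $m$-arcs is linearly separated'' and (b) ``the diametral matching is non-crossing'', but you then defer the resolution to a ``routine estimate'' on the inner radius and rotation. Two problems. First, for even $m$ your own configuration puts both ends of every matching edge on the same $m$-gon, so (b) fails outright and you would have to change both the matching \emph{and} the broken double stars, then redo the edge-count and the planarity checks --- none of which is sketched. Second, even for odd $m$ you have not shown that any choice of parameters makes (a) and (b) hold simultaneously; insisting on (a) is a strong structural constraint (it forces every four diametral endpoints $p_k,p_j,p_{k+m},p_{j+m}$ into a very specific Radon-type position), and it is not at all clear that it is compatible with (b) once $m\ge 5$.

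The paper sidesteps this entirely. It uses the \emph{same} abstract decomposition as you do --- relabelling your $1,\dots,m,m{+}1,\dots,2m$ as $a_1,\dots,a_m,b_1,\dots,b_m$, each $F_i$ has the star at $a_i$ with leaves $a_{i+1},\dots,a_m,b_1,\dots,b_{i-1}$ and the star at $b_i$ with the complementary leaves, exactly your two $m$-arcs --- but a different geometric realization, the \emph{$k$-staircase}: the $a_i$'s lie on a convex arc in one half-plane, and each $b_i$ is pushed into the intersection of the triangles $a_\ell b_\ell a_j$ for $\ell<j<i$ in the other half-plane. The point is that the paper never tries to achieve your condition~(a); instead planarity of each $F_i$ is checked edge-pair by edge-pair, using only that the $a$'s and $b$'s are separated by the $y$-axis and that the line through $a_i,b_i$ separates earlier-indexed from later-indexed points. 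This gives a two-line proof valid for every $k$, with no parity split and no parameter to tune.
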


The main aim of this note is to answer this conjecture in the negative. 
The authors in \cite{pach} give a special configuration of $n=4k$ points and construct a simple decomposition into $3n/4$ plane star-forests. Motivated by this example, we first describe a method generalizing it. This is the content of Theorem \ref{thm: glavna} 
Then we provide a point set on $n=6$ points which can be decomposed into $\frac{2n}{3}=4$ star-forests, disproving the conjecture. 
We then improve the bound further by constructing complete geometric graphs which can be decomposed into $\frac{n}{2}+1$ plane star-forests, which is best possible. This is the content of Theorem \ref{thm: optimalna}.  
Attacking this problem raised some further questions regarding decompositions of abstract complete graphs into star-forests. Mainly, our computations \cite{github} have shown us that for $n=6,8$, the decomposition of $K_n$ into $\frac{n}{2}+1$ star-forests is unique in a certain sense. In each fitting decomposition one star-forest was a perfect matching on $\frac{n}{2}$ vertices while the other $\frac{n}{2}$ star-forests were edge balanced, spanning and had centers at endpoints of an edge of the matching. We call such a decomposition a broken double stars decomposition. 
We prove that for $n$ even, every decomposition of the abstract complete graph $K_n$ into $ \frac{n}{2}+1$ star-forests is a broken double stars decomposition. This is the content of Theorem \ref{thm: unique star-forests}. 

\section{Decompositions of Complete Graphs into Star-Forests}

In this section our goal is to define a unique construction of star-forest decompositions of the complete graph.  These star-forests will be a byproduct of a decomposition of $K_n$ into special trees, called \emph{double stars}. A double star is a graph composed of two vertex-disjoint stars, whose centers are joined by an edge. For an even $n$, the double star decomposition of $K_n$ is obtained in a following way. Let $M$ be a perfect matching in $K_n$. Then, for each edge $e\in M$ we create a double star by connecting each endpoint of $e$ to $\frac{n-2}{2}$ vertices of $K_n$ in such a way that we do not obtain a cycle. This results in a decomposition of the edge set into $\frac{n}{2}$ double stars.
From this we can define a decomposition of $K_n$ into star-forests in the most natural way. One forest is a matching on $k$ edges, and each of the other $k$ forests is composed of two stars with $k-1$ edges each, whose centers are the endpoints of an edge of the matching. This construction was described in \cite{akiyama} and again in \cite{pach}. For a visual explanation see Figure \ref{fig: broken_double_stars}. We will call any such a decomposition of $K_n$ the \emph{broken double stars} decomposition. We now formally state the main result of this section. 

\begin{figure}[h]
    \centering
    \includegraphics[width=0.25\linewidth]{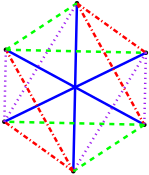}
    \caption{A broken double star decomposition of $K_6$}
    \label{fig: broken_double_stars}
\end{figure}

\begin{theorem}\label{thm: unique star-forests}
    Let $n=2k$ be an even integer. Then any decomposition of $K_n$ into $k+1$ star-forests is a broken double star decomposition.
\end{theorem}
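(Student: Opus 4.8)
The plan is to count edges carefully. A decomposition of $K_n$ into $k+1$ star-forests partitions the $\binom{n}{2}=\binom{2k}{2}=k(2k-1)$ edges, so on average each star-forest has $\frac{k(2k-1)}{k+1}$ edges; since a star-forest on $n=2k$ vertices has at most $n-1=2k-1$ edges (it is a forest, and a spanning star-forest has exactly one edge per non-center vertex), and $(k+1)(2k-1)-k(2k-1)=2k-1$, the slack is exactly $2k-1$. So if all $k+1$ forests were spanning we would overshoot by $2k-1$ edges; equivalently, summing $(\text{number of components of }F_i)$ over the forests, we get $\sum_i c(F_i) = \sum_i (n - |E(F_i)|) = (k+1)n - \binom{n}{2} = (k+1)2k - k(2k-1) = k(2k+1)-\dots$ — I will set up this identity precisely so that it pins down the total "deficiency" and forces almost every forest to be spanning with very few components.

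The key structural step is a degree argument at each vertex. In $K_n$, vertex $v$ has degree $2k-1$, and in each star-forest $F_i$ it is either a center (possibly of a trivial one-vertex star, contributing nothing) or a leaf (contributing exactly $1$ to its degree). Hence the number of indices $i$ for which $v$ is a leaf of $F_i$ is at most $2k-1$, so $v$ is a center or isolated in at least $(k+1)-(2k-1) = 2-k$ forests — not directly useful for large $k$; instead I will count, over all $v$ and all $i$, the pairs where $v$ is a non-isolated center, and cross it with the component count $\sum_i c(F_i)$: each non-trivial star has exactly one center, each trivial star is a lone center, so $\sum_i c(F_i) = \sum_i (\#\text{centers in } F_i)$ while also $\sum_i(\#\text{non-trivial stars in }F_i) = \sum_i |E(F_i)| - \sum_i(\#\text{leaf-edges beyond the first per star})\dots$ Cleaner: I will instead argue that the forest with the fewest edges must be a perfect matching. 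Since a matching on $2k$ vertices has $k$ edges and contributes $k$ components, and any star-forest that is not a matching and not spanning wastes too much, the edge-deficiency budget $2k-1$ forces the configuration: one forest is a perfect matching (deficiency $k-1$ relative to spanning, using $k$ components vs $1$), and... I need to recheck: a spanning star-forest has $2k-1$ edges and some number $s\ge 1$ of components, contributing deficiency $s-1$; a perfect matching has $k$ components hence deficiency... as a forest it has $2k - k = k$ components, so relative to a single spanning tree it has deficiency $k-1$ in edge count. Total deficiency across all $k+1$ forests is $(k+1)(2k-1) - k(2k-1) = 2k-1$. So the sum of $(\#\text{components} - 1)$ over the $k+1$ forests equals... $\sum c(F_i) - (k+1) = \big[(k+1)n - \binom n 2\big] - (k+1) = (k+1)(2k-1) - k(2k-1) = 2k-1$. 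Hence $\sum_i c(F_i) = (k+1) + (2k-1) = 3k$.

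Now the main work: show $\sum_i c(F_i) = 3k$ together with each $|E(F_i)| = n - c(F_i) = 2k - c(F_i) \le 2k-1$ forces the multiset $\{c(F_i)\}$ to be $\{k, 2, 2, \dots, 2\}$ (one matching with $k$ components, and $k$ double-star-forests with $2$ components each): indeed $c(F_i)\ge 1$, their sum is $3k$ over $k+1$ terms, and any star-forest with $c(F_i)=1$ is a single spanning star, which a vertex-degree count at its center (degree $2k-1 = n-1$, fine) — but then that center $v$ is a leaf in every other forest, so $v$ has $k$ leaf-incidences needing $k$ distinct edges, using up... this over-constrains unless $k$ is small, ruling out $c(F_i)=1$; similarly $c(F_i) \ge 3$ for two or more forests over-spends the budget $3k$ only if balanced against a $c(F_j)$ near $1$, which we just excluded — so all but one $c(F_i)$ equal $2$ and the last equals $3k - 2k = k$. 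The forest with $c=k$ and hence $k$ edges on $2k$ vertices with every component a star and $k$ components is exactly a perfect matching. Each remaining $F_i$ has $2$ components, $2k-2$ edges, every vertex non-isolated (since $2k-2 = n-2$ leaves plus $2$ centers $=n$, so it spans), i.e. it is two stars covering all vertices — a "broken double star" — and the final step is to verify the two centers of $F_i$ are the two endpoints of a matching edge: the edge between the two centers appears in some forest; it cannot appear in $F_i$ itself (the two centers are in different components), nor in another broken double star $F_j$ (there its two endpoints would each have to be a leaf or a center, and a short case analysis using that $F_j$ also spans shows the center–center edge of $F_i$ is forced into the matching). I expect this last identification — matching up which edges land in the matching and showing consistency of all $k$ center-pairs — to be the main obstacle, requiring a careful simultaneous argument rather than forest-by-forest reasoning; I would handle it by looking at each vertex $v$: it is a leaf in exactly $2k-1 - (\deg_{\text{matching}} v) = 2k-2$ of the forests... wait, $v$ has degree $2k-1$ total, degree $1$ in the matching, so degree $2k-2$ across the $k$ broken double stars, i.e. $v$ is a leaf in exactly as many of them as its non-center incidences, forcing $v$ to be a center in exactly one broken double star, and that forest's partner center must then be $v$'s matching-neighbor by an edge-accounting at $v$. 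This per-vertex bookkeeping closes the proof.
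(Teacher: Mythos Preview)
Your edge--component count $\sum_i c(F_i)=3k$ is correct and useful, but the central step of the argument is wrong. From $\sum_i c(F_i)=3k$ together with $c(F_i)\ge 2$ you \emph{cannot} conclude that the multiset of component counts is $\{k,2,2,\dots,2\}$: for $k=4$ the profile $(3,3,2,2,2)$ also sums to $3k$ with every term at least $2$, and for $k=5$ both $(4,3,2,2,2,2)$ and $(3,3,3,2,2,2)$ do. Your sentence ``$c(F_i)\ge 3$ for two or more forests over-spends the budget $3k$ only if balanced against a $c(F_j)$ near $1$'' is simply false, and since this is what the whole proof rests on, the argument does not go through. There are also two smaller slips. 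First, your argument against $c(F_i)=1$ is garbled: the center $v$ of a spanning star has all its $2k-1$ edges in that one forest, so $v$ is \emph{isolated} (not a leaf) in every other forest; the clean way to exclude this case is to delete $v$ and obtain a decomposition of $K_{2k-1}$ into only $k$ star-forests, contradicting the Akiyama--Kano lower bound. Second, even granting the profile $\{k,2,\dots,2\}$, a $k$-component forest with $k$ edges on $2k$ vertices need not be a perfect matching --- for instance, one star with $k-1$ edges, one single edge, and $k-2$ isolated vertices also has $k$ components and $k$ edges.

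The paper handles the missing step by induction: it verifies $k=3$ directly, then for larger $k$ finds a two-component forest (your counting guarantees one exists, since $3(k+1)>3k$), deletes its two centers to get a decomposition of $K_{2k-2}$ into $k$ star-forests, applies the inductive hypothesis, and argues that the edge between the deleted centers must extend the smaller matching. Two remarks in your favour: your counting actually \emph{does} determine the profile when $k=3$ (the only way to write $9$ as four integers each $\ge 2$ is $3+2+2+2$), so it yields a computation-free proof of the base case; and your final per-vertex bookkeeping --- once one forest is known to be a perfect matching and the rest are spanning with two components, every vertex is a center in exactly one of the $k$ double-star forests, whence the two centers of each such forest must be matching-partners and each star must have exactly $k-1$ edges --- is a clean alternative to the paper's Lemma~\ref{lemma: matchinimpliesbrokendoublestars}. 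What is genuinely missing is a way to force the component profile for $k\ge 4$, and your counting alone cannot supply it.
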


But before proving this we need a couple of building blocks. We first note that we can assume that a decomposition of $K_{2k}$ into $k+1$ star-forests cannot contain a star forest with a single component. This is because removing the center of this component and all of the adjacent edges would result in a decomposition of $K_{2k-1}$ into $k< \lceil \frac{2k-1}{2}\rceil+1$ star forests. Which is impossible by the result of Akiyama and Kano \cite{akiyama}.

\begin{lemma}\label{lemma: matchingimpliesspanning}
    Let $n=2k$ be an even integer and let $F_0,\dots, F_k$ be a decomposition of $K_n$ into $k+1$ star-forests. If $F_0$ is a matching then every other star-forest is spanning and consists of two components. 
\end{lemma}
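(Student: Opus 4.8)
The plan is to run a short global edge-counting argument and observe that it is forced to be tight. Recall that $K_n$ has $\binom{2k}{2}=k(2k-1)$ edges. The basic fact I would use is that a star-forest which covers $v$ vertices with its non-trivial star components (that is, $v$ is the number of non-isolated vertices) and has $c$ such components uses exactly $v-c$ edges, since restricted to those $v$ vertices it is a forest with $c$ connected components. By the observation preceding the lemma, no member of the decomposition is a single star, so each of $F_1,\dots,F_k$ has at least two non-trivial components, i.e. $c_i\ge 2$. Together with $v_i\le 2k$ this yields
\[
|E(F_i)| \;=\; v_i-c_i \;\le\; 2k-2 \qquad\text{for every } i\ge 1,
\]
with equality precisely when $v_i=2k$ (so $F_i$ is spanning, with no isolated vertices) and $c_i=2$ (so $F_i$ consists of exactly two stars).

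Next I would bound $F_0$: as a matching on the $2k$ vertices it has at most $k$ edges, with equality exactly when it is a perfect matching. Summing over the whole decomposition,
\[
k(2k-1) \;=\; \sum_{i=0}^{k}|E(F_i)| \;\le\; k + \sum_{i=1}^{k}(2k-2) \;=\; k + k(2k-2) \;=\; k(2k-1).
\]
Hence every inequality above is an equality. In particular $|E(F_i)|=2k-2$ for each $i\ge 1$, which by the equality analysis above forces each such $F_i$ to be spanning and to consist of exactly two components, as claimed (and, as a by-product, $F_0$ is a perfect matching).

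There is essentially no hard step here; the points that need care are purely bookkeeping. First, one must state the edge formula correctly by excluding isolated vertices, so that the bound $2k-2$ and, crucially, its equality case $(v_i,c_i)=(2k,2)$ are accurate — a single star on many vertices would otherwise have up to $2k-1$ edges and break the count, which is exactly why the $c_i\ge 2$ input matters. Second, one must make sure that this input $c_i\ge 2$ is legitimately available: it is supplied by the observation stated just before the lemma (if some $F_i$ were a single star, deleting its center would decompose $K_{2k-1}$ into only $k$ star-forests, contradicting the Akiyama--Kano bound \cite{akiyama}). Once these two points are in place, the conclusion is forced by the arithmetic.
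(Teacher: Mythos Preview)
Your argument is correct and follows essentially the same edge-counting approach as the paper: bound the matching by $k$ edges and each remaining star-forest by $2k-2$ edges, then observe that the total must hit $\binom{2k}{2}=k(2k-1)$. The paper phrases it as a contradiction (if some $F_i$ had at least three components it would contribute at most $2k-3$ edges, leaving too few in total), whereas you run the same count as a tight inequality and read off the equality case; your version is arguably cleaner, since it makes explicit why both ``spanning'' and ``exactly two components'' are forced simultaneously.
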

\begin{proof}
    Assume that we have such a decomposition and that one of the star-forests has at least three components. Then it can have at most $2k-3$ edges. Then the number of edges covered by the decomposition is at most $k+2k-3 + (k-1)(2k+2) < k(2k-1) = \binom{2k}{2}$. 
\end{proof}

\begin{lemma}\label{lemma: matchinimpliesbrokendoublestars}
    Let $n=2k$ be an even integer and let $\mathcal{F}$ be a decomposition of $K_n$ into $k+1$ star-forests. If some $F\in\mathcal{F}$ is a matching then this decomposition is a broken double stars decomposition. 
\end{lemma}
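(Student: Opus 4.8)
The plan is to fix the global structure by counting and then locate the centres of the remaining star-forests by a degree argument at each vertex. Let $F_0\in\mathcal F$ be the matching. By Lemma~\ref{lemma: matchingimpliesspanning} each of $F_1,\dots,F_k$ is spanning with exactly two components, hence has exactly $2k-2$ edges; since the whole decomposition must cover all $\binom{2k}{2}=k(2k-1)$ edges of $K_n$, this forces $|F_0|=k$, so $F_0$ is a \emph{perfect} matching. I would next observe that no $F_i$ with $i\ge 1$ has an isolated vertex: if one of the two star components of some $F_i$ were a single vertex, all edges of $F_i$ would lie in one star, and deleting that star's centre would leave a decomposition of $K_{2k-1}$ into at most $k$ star-forests, contradicting the Akiyama--Kano bound (this is exactly the observation recorded just before Lemma~\ref{lemma: matchingimpliesspanning}). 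Hence every vertex has degree at least $1$ in every $F_i$ with $i\ge 1$.

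Now fix a vertex $v$. From $d_{F_0}(v)=1$ we get $\sum_{i\ge 1}d_{F_i}(v)=2k-2$, and since each summand is at least $1$ this yields $d_{F_i}(v)\le k-1$ for every $i\ge 1$. Applying this to the centre of the larger of the two stars of $F_i$ shows that star has at most $k-1$ edges, and as the two stars together have $2k-2$ edges, both have exactly $k-1$ edges. So every $F_i$ with $i\ge 1$ is a spanning star-forest consisting of two vertex-disjoint stars with $k-1$ edges each -- precisely the shape required of the non-matching parts of a broken double star decomposition.

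It remains to show the two centres of each $F_i$ are the endpoints of an edge of $F_0$. I would assume $k\ge 3$ and check $k\le 2$ directly (for $n=4$ the three star-forests are the three perfect matchings of $K_4$, and a suitable choice of centres realises the broken double star form). For $k\ge 3$ each star $K_{1,k-1}$ has a unique centre, so there are $2k$ centre-slots for $2k$ vertices; a vertex that were a centre in two of the $F_i$ would already have degree $2(k-1)=2k-2$ in those two, forcing degree $0$ in some remaining $F_j$, which is impossible, so by counting each vertex is the centre of exactly one $F_i$. The centre-pairs $\{x_i,y_i\}$, $i=1,\dots,k$, therefore form a perfect matching $M$. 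Finally $x_iy_i\notin F_i$ (they lie in distinct components), and $x_iy_i\notin F_j$ for $j\ge 1$, $j\ne i$, since in $F_j$ both $x_i$ and $y_i$ are leaves (each is a centre only in $F_i$) and a star-forest all of whose stars have at least two edges has no two adjacent leaves. Hence $x_iy_i\in F_0$, so $M\subseteq F_0$ and thus $M=F_0$, which is exactly the statement that $\mathcal F$ is a broken double star decomposition.

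The hard part is this last step. The edge-balance of the $F_i$ falls out of a one-line degree count, but showing \emph{which} pairs of vertices get matched -- that the centre-pairs are precisely the edges of $F_0$ -- relies on the ``no two leaves of a star-forest with large stars are adjacent'' observation, which genuinely needs every component star to have at least two edges. That is exactly why the small cases $k\le 2$, where the component stars degenerate into single edges and the centres are not canonically determined, have to be split off and verified by hand.
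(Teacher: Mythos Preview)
Your proof is correct and takes a genuinely different route from the paper's. The paper first pins down the centre structure by a direct contradiction argument---assuming some forest has centres $v_i,v_j$ that are not a matching pair and tracing specific edges to show another forest then fails to be spanning---and only afterwards argues edge-balance by a second ad hoc contradiction. You reverse the order: a clean global degree count at each vertex (using $\sum_{i\ge 1}d_{F_i}(v)=2k-2$ with every summand $\ge 1$) gives $d_{F_i}(v)\le k-1$ and hence edge-balance immediately; then uniqueness of centres for $K_{1,k-1}$ with $k\ge 3$, pigeonhole, and the simple observation that two leaves of a star-forest with all stars of size $\ge 2$ are never adjacent force the centre-pairs to coincide with $F_0$. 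Your approach is more systematic and essentially bookkeeping, at the cost of having to split off $k\le 2$ (where centres are not canonical); the paper's argument avoids an explicit case split but is terser and leaves more to the reader when it asserts which edges must lie in $F^*$. Both establish the same result; yours is arguably the more transparent of the two.
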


\begin{proof}
    Let us label the vertices of $K_n$ as $\{v_1,v_2,\dots,v_n\}$. Assume that we have a decomposition of $K_n$ into star-forests such that one of the forests is a matching. Assume without loss of generality that the edges of the matching are given by $\{v_i,v_{i+k}\}$ for $1\le i\le k$. Now we will prove that for each forest $F' \neq F$ there exists an $i$ such that the centers of the stars in  star-forest $F'$ are exactly $v_i,v_{i+k}$. Assume for contradiction that there is a star-forest $F^*$ in the decomposition whose stars have centers $v_i$ and $v_j$ for some $j\neq i \pm k$. Then consider the star-forest with center in $v_{i+k}$ and let $u$ be the center of the other star in this forest. Assume that $F^*$ is spanning, and thus that $\{v_{i+k},v_j\} \in E(F^*)$ and without loss of generality $\{u,v_j\}\in E(F^*)$. But, since $\{v_i,v_{i+k}\}$ is an edge in the matching it means that the forest with centers $v_{i+k},u$ cannot be spanning, contradicting Lemma \ref{lemma: matchingimpliesspanning}. Lastly, we need to prove that inside of each star-forest, stars have equal number of edges. For this we will assume that the forest with centers in $v_i, v_{i+k}$ does not have stars with equal number of edges. Assume that in this star-forest $v_i$ is connected to vertices $\{v_{i+1},\dots,v_{i+k-1},v_{i+k+1}\}$ and that $v_{i+k}$ is connected with $\{v_{i+k+2},\dots, v_{2k}\}$. Then both of the edges $\{v_{i+k}, v_{i+1}\}$ and $\{v_{i+k},v_{i+k+1}\}$ need to be used by the star-forest with appropriate centers, which is clearly impossible. 
\end{proof}

\noindent The following observation has been checked computationally.
\begin{observation}\label{obs: truefor6}
    Any decomposition of $K_6$ into $4$ star-forests is a broken double stars decomposition.
\end{observation}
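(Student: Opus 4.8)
The plan is to reduce the statement to Lemma~\ref{lemma: matchinimpliesbrokendoublestars} by showing that in any decomposition of $K_6$ into four star-forests, one of the forests must be a perfect matching. First I would invoke the reduction preceding Lemma~\ref{lemma: matchingimpliesspanning}: no forest can be a single star, since deleting its center would decompose $K_5$ into three star-forests, which is impossible because $K_5$ requires at least $\lceil 5/2\rceil+1=4$ star-forests by Akiyama and Kano \cite{akiyama}. Hence each of the four forests has at least two components and therefore at most $6-2=4$ edges. Since $K_6$ has $15$ edges and $4\cdot 4=16$, the only admissible edge-distribution is $(4,4,4,3)$: three forests have exactly $4$ edges and one has exactly $3$.

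Next I would pin down the shapes. A star-forest with two components and $4$ edges on $6$ vertices must be spanning and consist of exactly two stars, so the three large forests are spanning double-stars. The $3$-edge forest has at least two components on at most $6$ vertices, so it is either a perfect matching (three components, three single edges) or a single edge together with a $2$-edge star, i.e. $K_2\cup S_2$ (two components, one isolated vertex). In the first case we are immediately finished by Lemma~\ref{lemma: matchinimpliesbrokendoublestars}, so the entire statement comes down to excluding the second case.

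The main obstacle is therefore ruling out the $K_2\cup S_2$ shape for the $3$-edge forest, and for this I would argue by a degree count across the three spanning double-stars. Write the $3$-edge forest as $\{ab\}\cup\{pq,pr\}$, let $z$ be the vertex it misses, and observe that all five edges at $z$ and the three edges $pa,pb,pz$ must be distributed among the three $4$-edge forests. Since $p$ is covered by each spanning forest but has only these three edges available, $p$ has degree exactly $1$ in each of them, which forces $pa,pb,pz$ into three distinct forests. Propagating this, together with the requirements that each $4$-edge forest cover all six vertices and that each of its two components be a star (in particular, no component may be a path on four vertices), rapidly over-constrains the placement of the remaining edges at $a,b,q,r,z$: every way of completing the three spanning forests is forced to leave some vertex uncovered or to create a non-star component.

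As this last step is a finite case analysis on six labelled vertices, it can be completed by hand with some bookkeeping, or — as the authors did — verified by exhaustive computer search \cite{github}. I expect this case elimination, rather than any conceptual difficulty, to be the real work: the structural reductions are clean, but confirming that \emph{no} assignment survives the constraints requires either a careful organization of the subcases (for instance, fixing the shape of the forest containing $pz$ and checking that the remaining two forests cannot both be spanning double-stars) or a machine check.
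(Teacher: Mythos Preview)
Your proposal is correct and in fact goes considerably further than the paper does. The paper offers no argument for Observation~\ref{obs: truefor6} at all: it simply records that the statement was verified by an exhaustive computer search and cites the code at \cite{github}. Everything you wrote---the exclusion of single-component forests, the edge count forcing the profile $(4,4,4,3)$, the observation that each $4$-edge forest must be a spanning two-component star-forest, the dichotomy for the $3$-edge forest, and the degree argument pinning the three remaining edges at $p$ into three distinct spanning forests---is sound and is genuinely new relative to the paper.

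The difference in approach is thus that the paper treats the observation as a black-box computational fact and uses it solely as the base case of the induction, whereas you reduce it by hand to a much smaller residual case (ruling out the $K_2\cup S_2$ shape) before conceding that a finite check is needed. What your approach buys is transparency: a reader can see exactly where the combinatorial tension lies, and the remaining case analysis is small enough to be completed by hand with moderate care. What the paper's approach buys is brevity---no argument to write or verify beyond running the code. Your honest acknowledgement that the last step is a bookkeeping exercise (or a machine check) is entirely in line with how the paper itself handles the claim.
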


\noindent Finally we proceed with a proof of Theorem \ref{thm: unique star-forests}
\begin{proof}[Proof of Theorem \ref{thm: unique star-forests}]
We will proceed by induction on $k$. For the base case $k=3$ the claim holds by Observation \ref{obs: truefor6}. Now assume that it holds for $k-1$. Suppose that we have a decomposition of $K_{2k}$ into $k+1$ star-forests. If any star-forest has a single component which is not a single vertex, removing the center of that component leaves us with a decomposition of $K_{2k-1}$ into $k$ star-forests, which is impossible as $k < \lceil \frac{2k-1}{2}\rceil +1$. Further if each star-forest has at least three components, we cannot cover all of the edges since each component can have at most $2k-3$ edges and $(k+1)(2k-3)<\binom{2k+1}{2}$. Therefore there is at least one star-forest with two components. Consider the graph obtained by removing the centers (call them $c,d$ of such a star-forest from $K_{2k}$. We are then left with a decomposition of $K_{2k-2}$ into $k$ star-forests. By the inductive hypothesis, this must be a broken double star decomposition. One of the edges removed was the one between the two centers, call it $e$.
Assume that $e$ belongs to a forest with centers $a,b$. Then the edges between $c,d$  and $a,b$ cannot be in the same forest as $e$. So assume that two of these edges, $\{a,c\}$ and $\{d,b\}$ belong to another forest with centers $a',b'$. Then, without loss of generality, the edges $\{a,a'\},\{b,b'\}$ belong to the forest with centers $a,b$ and the edges $\{a,b'\},\{b,a'\}$ must belong to a forest different than the one with centers $a',b'$, contradicting the inductive assumption. Therefore, $e$ must extend the matching in the decomposition of the smaller graph and the result follows by Lemma \ref{lemma: matchinimpliesbrokendoublestars}. 
\end{proof}

\section{Decomposing Complete Geometric Graphs into Plane Star-Forests}

Firstly, we will describe a generalization of the method from \cite{pach} and use it to construct a counterexample. After this we  construct a complete geometric graph on $2k$ vertices which can be decomposed into $k+1$ plane star-forests. From now on, we will write $GP$ for a complete geometric graph whose underlying point set is $P\subseteq\mathbb{R}^2$.

\begin{theorem} \label{thm: glavna}
    Let $c\in (1/2,1)$ be a constant. If there is a complete geometric graph on $n_0$ points which can be partitioned into $cn_0$ plane star-forests, in such a way that each vertex is a center of at least one tree, then for each integer $k\ge1$, there exists a complete geometric graph on $kn_0$ points that can be  partitioned into $ckn_0$ plane star-forests. 
\end{theorem}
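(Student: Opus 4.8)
The plan is to take the complete geometric graph $GP_0$ on the $n_0$ points witnessing the hypothesis, together with its decomposition into $cn_0$ plane star-forests $S_1,\dots,S_{cn_0}$, and to build a complete geometric graph on $kn_0$ points by placing $k$ scaled, affinely transformed copies of $P_0$ in ``general position relative to one another,'' e.g. inside $k$ tiny pairwise far-apart disks arranged so that from any one disk the other $k-1$ clusters look essentially like $k-1$ points in convex position. Call the clusters $C_1,\dots,C_k$, each an affine copy of $P_0$. The edge set of the big complete graph splits into the $k$ internal edge sets (one per cluster) and the $\binom{k}{2}$ bipartite ``bundles'' of edges between pairs of clusters. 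Counting: we need $ckn_0$ star-forests in total, and a natural budget is to reuse the $cn_0$ ``slots'' from the base decomposition $k$ times over, putting into slot-copy $(i,j)$ (for $1 \le i \le k$, $1 \le j \le cn_0$) the $j$-th star-forest of the $i$-th cluster's internal decomposition, and then to fold the inter-cluster edges into these same slots without creating crossings or cycles or degree-2 vertices.

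The key steps I would carry out, in order. \textbf{Step 1: Internal forests.} In each cluster $C_i$ independently use the given decomposition; this uses $cn_0$ slots per cluster, $ckn_0$ in total, and each such forest is plane within its cluster's small disk. \textbf{Step 2: Route the inter-cluster edges.} Fix an ordering of the clusters. For a pair $C_i, C_j$ with $i<j$, all edges from $C_i$ to $C_j$ leave $C_i$ ``heading toward'' the cluster $C_j$; the crucial geometric point is that edges going to far-away, nearly-point-like clusters behave like a star centered appropriately. I would assign the bundle between $C_i$ and $C_j$ to slots indexed by cluster $i$ or cluster $j$: concretely, give each vertex $v$ of $C_i$ the job of being a star-center reaching out to (a controlled subset of) the vertices of $C_j$, and dually for $C_j$ reaching back. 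Because every vertex of $P_0$ is a center of at least one tree in the base decomposition (this is exactly where that hypothesis is used), each vertex $v \in C_i$ has at least one ``home slot'' $S_{j(v)}$ in which it is already a center; we append to $v$'s star in that slot the edges from $v$ to the appropriate far cluster, and plane-ness is preserved because those new edges emanate from $v$ in a narrow cone pointing away from $C_i$ and the existing star at $v$ lives inside $C_i$'s disk. \textbf{Step 3: Acyclicity and the star structure.} Check that after these additions each forest is still a disjoint union of stars: a vertex that was a center stays a center (its new pendant edges go to leaves in another cluster which themselves are not centers in that slot), and a vertex that was a leaf stays a leaf. \textbf{Step 4: Plane-ness across clusters.} Two edges in the same final forest either lie in a single cluster disk (handled by Step 1), or lie in disjoint cluster disks plus the ``corridor'' between two disks — and by the general-position/small-disk arrangement, corridors for different disk pairs are disjoint, and within one corridor the edges form the leaves of a single star, which are pairwise non-crossing. \textbf{Step 5: Count.} Verify every edge of $K_{kn_0}$ is covered exactly once: internal edges by Step 1, and each inter-cluster edge $uv$ with $u\in C_i$, $v\in C_j$ exactly once by the convention in Step 2 (say: assigned to the home slot of whichever of $u,v$ we designated as responsible for that bundle, with a fixed tie-breaking rule so it is placed once, not twice). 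Since $c>1/2$, the arithmetic $ckn_0 \ge \tfrac12 kn_0 + \dots$ leaves enough room; since $c<1$ we are genuinely below the trivial $kn_0-1$ bound, so the statement has content.

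The main obstacle I expect is \textbf{Step 2 together with Step 4}: making the routing of the $\binom{k}{2}$ inter-cluster bundles simultaneously (i) partition cleanly among the available center-slots so that no forest acquires a second center or a cycle, and (ii) remain crossing-free — both inside each corridor and, more delicately, where a corridor meets a cluster disk in which other edges of the same forest already live. This is where the precise geometry of the construction (how small the disks are, how the $k$ cluster-centers are placed, and exactly which subset of a far cluster each near-vertex is made responsible for) has to be pinned down; the hypothesis that every vertex of $P_0$ is a center of some tree is what guarantees there is always a legitimate slot to hang the outgoing edges on, so it should be invoked explicitly at that point. The bookkeeping that each bundle is charged to exactly one slot and that the total stays at $ckn_0$ is then routine.
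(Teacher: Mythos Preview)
Your construction is the \emph{dual} of the paper's, and that difference is exactly where the gap lies. You take $k$ far-apart copies $C_1,\dots,C_k$ of the base configuration $P_0$; the paper instead replaces each of the $n_0$ points $a_j$ of $P_0$ by a tiny cluster $A_j$ of $k$ points (chosen so that every transversal has the order type of $P_0$). In the paper's version each base forest $F_i$ spawns $k$ new forests $F_i^1,\dots,F_i^k$: the star of $F_i$ centered at $a_j$ becomes a star centered at $a_j^l$, whose leaves are all of $A_{j'}$ for each original leaf $a_{j'}$, together with the as-yet-unused internal edges of $A_j$. Thus the only ``new'' edges are the intra-cluster ones, and in any one forest they lie in a single star per cluster, hence are automatically plane; planarity of the inter-cluster edges reduces, via the order-type hypothesis, to planarity of the original $F_i$. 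The hypothesis that every vertex is a center is used precisely to guarantee that every cluster's internal edges get absorbed somewhere.

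In your version the ``new'' edges are the inter-cluster ones, and Steps~2 and~4 do not go through as written. In a slot $(i,s)$ the internal forest $S_s$ inside $C_i$ typically has several centers. If more than one of them sends edges out toward some $C_j$, then a long edge from one center $u$ toward $C_j$ must traverse the disk of $C_i$, and nothing prevents it from crossing an internal edge $v\,$--$\,v'$ of $S_s$ incident to a \emph{different} center $v$; your ``narrow cone pointing away from $C_i$'' only explains why the new edges at $u$ miss the old edges \emph{at $u$}. (Restricting to a single outgoing center per slot does not save you either: there are only $cn_0<n_0$ slots attached to $C_i$, so by pigeonhole some vertex of $C_i$ would be left with nowhere to hang its $(k-1)n_0$ outgoing edges.) For the same reason the claim that ``within one corridor the edges form the leaves of a single star'' is unjustified. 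A simultaneous choice of cluster placements and per-slot bundle partitions avoiding all of this is not supplied, and it is not clear one exists; the paper's point-blow-up sidesteps the difficulty entirely, because there the new edges are the short intra-cluster ones and each sits inside a single star.
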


\begin{proof}
Let $S$ be the underlying point set of the original complete geometric graph and let $k>1$ be an integer. Label the points in $S$ by $a_1,\dots,a_{n_0}$. Now, replace each $a_i$ by a set $A_i=\{a_i^1,\dots,a_i^k\}$ of $k$ points in general position in such a way that if we choose $b_1,\dots,b_{n_0}$ where $b_i\in A_i$, we obtain a point set of the same order type as $S$. Call the new point set $S^k$.
Now if $F_1,\dots,F_{cn_0}$ is the decomposition of $GS$ into plane star-forests, from this, we will obtain the decomposition of $GS^k$ into $c(kn_0)$ plane star-forests. 
Let $a_j$ be the center of a star in $F_i$. We will construct $k$ new stars with centers in $a_j^1,\dots,a_j^k$. Start with $a_j^1$, add to it all of the edges of the form $\{a_j^1,a_j^l\}$ that were not already used (in the case of $a_j^1$, none were used). Now for each edge of the form $\{a_j,a_{j'}\}$ in $F_i$, add all of the edges from $a_j^1$ to the vertices in $A_{j'}$. Continue doing this for each vertex $a_j^l$, where $l\in\{1,2,\dots,k\}$.
We do this for each star in $F_i$ and for each forest in the original decomposition. The result of this process is $cn_0$ families of star-forests, each of size $k$. And the planarity of the star-forests follows from the definition of the point set $S^k$. To see this, assume that a tree in the new decomposition has an intersection. Then the intersection is between edges whose $4$ vertices are in different $A_i$'s. But if this was the case, then a choice of transversal that includes this $4$ vertices would induce a crossing inside the original decomposition of $GS$.  
\end{proof}

We note that the assumption that each point is a center of at least one forest is crucial as otherwise the star-forests constructed in the proof do not cover all of the edges.

      \begin{figure}[ht]
            \centering
             \begin{tabular}{cc}
     \includegraphics[scale=0.3]{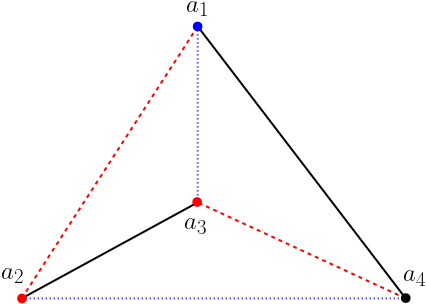} \textcolor{white}{aaaa}
    \includegraphics[scale=0.3]{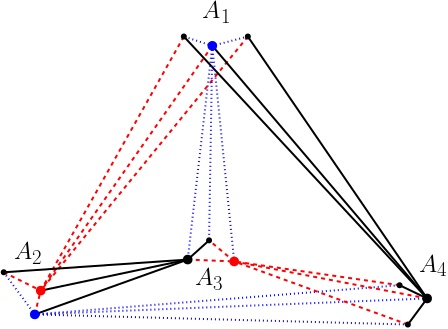} 
    
    \end{tabular}
            \caption{A complete geometric graph on $4$ vertices decomposed into three plane star-forests and the corresponding graph on $12$ vertices with the decomposition into $9$ star-forests (only $4$ are drawn for readability). Each vertex of the point set on the left has been used as a center of one tree and colored accordingly.}
            \label{fig: 4pointscorr}
        \end{figure}

While Theorem \ref{thm: glavna} gives us a nice way of constructing infinitely many complete geometric graphs that can be partitioned into few plane star-forests, we still need concrete small examples to be able to produce the infinitudes. One example was given by the authors in \cite{pach} and can be found in Figure \ref{fig: 4pointscorr}. This example motivated Conjecture \ref{conj: wrong}. We proceed in a similar fashion.

\begin{lemma}
    There exists a configuration of $6$ points in the plane which can be partitioned into $4$ plane star-forests in such a way that each point is a center of at least one star.
\end{lemma}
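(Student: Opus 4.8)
The plan is to exhibit an explicit $6$-point configuration together with an explicit partition of $K_6$ into $4$ plane star-forests, and then verify by inspection that (i) each star-forest is crossing-free in the given drawing, and (ii) every one of the six points serves as the center of a star in at least one of the four forests. By Theorem~\ref{thm: unique star-forests} the abstract combinatorics is already forced: any decomposition of $K_6$ into $4=\frac{6}{2}+1$ star-forests must be a broken double stars decomposition, i.e.\ one forest $F_0$ is a perfect matching $\{x_1y_1,x_2y_2,x_3y_3\}$, and each of the other three forests $F_j$ is spanning with exactly two stars whose centers are the two endpoints $x_j,y_j$ of one matching edge, each star carrying two edges. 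So the only freedom is (a) the placement of the six points, and (b) for each $j$, which two of the four non-center vertices get attached to $x_j$ and which two to $y_j$. The task reduces to choosing these so that all four drawings are plane and the center-coverage condition holds (note the matching $F_0$ has no centers of its own in any essential sense, so the three double-star forests must between them use all six points as centers -- this happens automatically, since each $F_j$ contributes $x_j$ and $y_j$ and the three matching edges exhaust all vertices).

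First I would fix a concrete point set -- a natural candidate is a configuration with a triangular symmetry, e.g.\ three ``outer'' points forming a large triangle and three ``inner'' points forming a small triangle, with matching edges $x_jy_j$ joining each outer point to the ``opposite'' inner point, or alternatively the $6$-point set obtained as a perturbation of two concentric triangles. Then I would write down, for each matching edge $e_j=x_jy_j$, an explicit split of the remaining four vertices into two pairs, one pair joined to $x_j$ and one to $y_j$; the double star on $e_j$ together with these four edges is plane iff the segment $x_jy_j$ can be ``fanned out'' to its two assigned neighbors on each side without the fans crossing each other or the $x_jy_j$ edge -- a purely local condition that is easy to arrange by choosing, for each endpoint, the two neighbors that are angularly adjacent as seen from that endpoint. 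Finally, I would check that the edge $x_1y_1$ together with $x_2y_2$ and $x_3y_3$ is itself a plane matching (immediate for a generic symmetric placement), completing the verification. I expect the main obstacle to be the bookkeeping: one must simultaneously satisfy three planarity constraints (one per double-star forest) and a partition constraint (the $4\times 3 = 12$ assigned edges plus the $3$ matching edges must be exactly the $\binom{6}{2}=15$ edges of $K_6$, with none repeated). The cleanest way to manage this is to fix the point set first, list the $15$ edges, and then greedily assign -- the symmetry of a two-concentric-triangles configuration should make a symmetric assignment work, so that only one of the three forests needs to be checked by hand and the other two follow by the rotational symmetry. A figure showing the point set and the four forests will serve as the certificate, exactly as in Figure~\ref{fig: 4pointscorr} for the $4$-point case.
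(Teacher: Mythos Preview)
Your overall strategy---pick an explicit $6$-point set, write down a broken-double-star decomposition, and verify planarity---is exactly what the paper does. The paper's proof is two sentences: it takes a crossing-minimal $6$-point configuration (in the sense of \cite{welzl}), displays the four star-forests in Figure~\ref{fig: 6points}, and notes that three of them are $2$-component and one (the matching) is $3$-component. Your extra step of invoking Theorem~\ref{thm: unique star-forests} to know in advance that the decomposition must be of broken-double-star type is a pleasant organizing device the paper does not use, but it is not needed for the existence proof.

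Where your proposal is shaky is the \emph{specific} candidate. The rotationally symmetric two-concentric-triangles idea runs into Lemma~\ref{lem: necessary}: any SF-extendable matching must have every pair of segments in stabbing position (convex hull of the four endpoints is a triangle). For an aligned inner triangle with the radial matching $A_iB_i$, a direct check shows the four endpoints of two radial edges form a convex quadrilateral, so this matching is not SF-extendable. For the ``outer to opposite inner'' matching you suggest, the three segments all pass near the centroid and pairwise cross, so the matching is not even plane. Thus the hoped-for shortcut---check one forest and get the other two by the $120^\circ$ rotation---does not work for the natural symmetric matchings, and you would have to abandon the symmetry and search for an asymmetric decomposition on that point set (if one exists at all; only $6$ of the $16$ order types on six points admit such a decomposition). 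The paper sidesteps this by choosing a non-symmetric crossing-minimal configuration from the outset; alternatively, the $3$-staircase or $3$-comet constructed later in Section~3 would serve.
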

\begin{proof}
    We consider a configuration of $6$ points which is crossing-minimal according to \cite{welzl}. We decompose the graph into $4$ star-forests as in Figure \ref{fig: 6points}.   The graph has thus been decomposed into three 2-component star-forests colored blue, red and black and one 3-component forest colored purple.
\end{proof}
Now, using the point set of $n_0=6$ elements from the above lemma, which can be decomposed into $2n_0/3=4$ star-forests, we obtain as an easy corollary a family of point sets on $n=6k$ points which can be decomposed into $2n/3$ star-forests, thus disproving Conjecture \ref{conj: wrong}. We state this formally below.

\begin{corollary}
    For every $n$ divisible by $6$, there exists a geometric graph on $n$ vertices which can be decomposed into $2n/3$ plane star-forests. 
\end{corollary}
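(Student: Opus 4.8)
The plan is to invoke Theorem~\ref{thm: glavna} directly, using the $6$-point configuration produced by the preceding lemma as the base case. That lemma gives a complete geometric graph on $n_0 = 6$ points which decomposes into $4 = 2n_0/3$ plane star-forests, and crucially the decomposition is arranged so that every one of the $6$ points serves as the center of at least one star; this is exactly the hypothesis Theorem~\ref{thm: glavna} requires.

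Concretely, I would first set $c = 2/3$, which lies in the open interval $(1/2,1)$ as demanded by Theorem~\ref{thm: glavna}, and observe that $cn_0 = (2/3)\cdot 6 = 4$ is an integer, so the base configuration genuinely realizes the ``$cn_0$ plane star-forests, each vertex a center'' situation. Then, for an arbitrary $n$ divisible by $6$, write $n = 6k$ with $k \ge 1$. Applying Theorem~\ref{thm: glavna} with this $c$ and this $n_0$ yields a complete geometric graph on $kn_0 = 6k = n$ points that can be partitioned into $ckn_0 = (2/3)\cdot 6k = 4k = 2n/3$ plane star-forests, which is precisely the claimed statement. The case $k=1$ is just the lemma itself, so the corollary covers all multiples of $6$ uniformly.

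There is essentially no obstacle here: the corollary is a mechanical specialization of Theorem~\ref{thm: glavna} to the explicit small example, and all the real work — the order-type-preserving blow-up of each point into a cluster of $k$ points, the redistribution of each star into $k$ stars, and the verification of planarity via transversals — has already been carried out in the proof of that theorem. The only points worth stating explicitly in the write-up are that $2/3 \in (1/2,1)$ and that $(2/3)n_0$ and $(2/3)kn_0$ are integers, so that the arithmetic of the theorem's conclusion delivers an integer number of star-forests, together with the remark that the required ``each point is a center'' property is guaranteed by the lemma. I would keep the proof to two or three sentences.

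\begin{proof}
Apply Theorem~\ref{thm: glavna} with $c = 2/3 \in (1/2,1)$ and with $n_0 = 6$, taking as the base configuration the $6$-point set from the preceding lemma, which admits a decomposition into $cn_0 = 4$ plane star-forests in which every point is the center of at least one star. For $n$ divisible by $6$ write $n = 6k$; the theorem then produces a complete geometric graph on $kn_0 = n$ vertices decomposing into $ckn_0 = \tfrac{2}{3}n$ plane star-forests.
\end{proof}
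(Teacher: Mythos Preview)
Your proposal is correct and follows exactly the approach the paper intends: the corollary is stated immediately after the $6$-point lemma and is presented as a direct consequence of Theorem~\ref{thm: glavna} applied with $n_0=6$ and $c=2/3$, with no further argument given. Your write-up makes the instantiation explicit (including the check that $c\in(1/2,1)$ and that the ``each vertex is a center'' hypothesis is supplied by the lemma), which is all that is needed.
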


\begin{figure}
            \centering
            \includegraphics[scale=0.55]{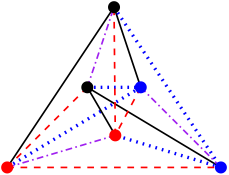}
            \caption{A complete geometric graph on $6$ vertices decomposed into four star-forests, vertices are colored the same as trees whose centers they are.}
            \label{fig: 6points}
        \end{figure}

For every $k\in \mathbf{N}$ we construct a point set on $2k$ points that can be decomposed into $k+1$ plane star-forests. By Theorem \ref{thm: unique star-forests}, one star-forest in the decomposition will be a matching. So our approach will be to first construct the matching as an arrangement of line segments in the plane and then use it to construct the other star-forests. We will say that the arrangement of $k$ line segments is \emph{SF-extendable} if the geometric graph with underlying point set consisting of the endpoints of the line segments admits a decomposition into $k+1$ plane star-forests, one of which is the matching given by the arrangement. We say that two line segments are in a \emph{stabbing} position if the convex hull of their endpoints is a triangle. If $s=ab$ and $l=cd$ are two line segments in a stabbing position and the convex hull of $\{a,b,c,d\}$ contains $c$ or $d$ in the interior, we say that \emph{$l$ stabs $s$}.

We now provide a necessary condition for an arrangement to be SF-extendable. 

\begin{lemma} \label{lem: necessary}
    Let $\mathcal{L}$ be an arrangement of line segments in the plane. If $\mathcal{L}$ is SF-extendable then every pair of segments from $\mathcal{L}$ is in a stabbing position. 
\end{lemma}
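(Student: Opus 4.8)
The plan is to argue by contradiction: suppose $\mathcal{L}$ is SF-extendable but contains two segments $s = \{v_i, v_{i+k}\}$ and $t = \{v_j, v_{j+k}\}$ that are \emph{not} in stabbing position. Since we have a decomposition into $k+1$ plane star-forests one of which is the matching $\mathcal{L}$, Theorem \ref{thm: unique star-forests} tells us the decomposition is a broken double star decomposition: every other star-forest is spanning, has exactly two balanced components, and its two centers are the endpoints of some segment of $\mathcal{L}$. In particular there is a plane star-forest $F_s$ whose centers are $v_i$ and $v_{i+k}$, and a plane star-forest $F_t$ whose centers are $v_j$ and $v_{j+k}$; each of these forests is spanning, so $F_s$ contains the four edges joining $\{v_i, v_{i+k}\}$ to $\{v_j, v_{j+k}\}$ (minus the one used by the matching, but $s,t \in \mathcal{L}$ are matching edges, not these connecting edges), split between the two stars centered at $v_i$ and $v_{i+k}$, and similarly for $F_t$.

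First I would set up the geometry. Two segments not in stabbing position means the convex hull of their four endpoints is a quadrilateral (the degenerate collinear cases are excluded by general position). Up to relabeling, the cyclic order of the four points around the hull is either $v_i, v_j, v_{i+k}, v_{j+k}$ (the two segments are the diagonals, i.e.\ they cross) or $v_i, v_{i+k}, v_j, v_{j+k}$ (the two segments are opposite sides). I would handle these two cases in turn. In the ``crossing diagonals'' case, the segment $s = \{v_i,v_{i+k}\}$ already crosses $t = \{v_j,v_{j+k}\}$, but both of these lie in the \emph{matching} star-forest, so the matching itself is not plane --- contradiction. (One could also note this case is subsumed by the side case after relabeling which pair we call $s$; the clean statement is that a quadrilateral hull forces one of the two ``diagonal'' pairings to be crossing, and we choose notation so that $s$ and $t$ are the crossing pair if any crossing pair exists among the connecting edges.)

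The substantive case is when $v_i, v_{i+k}, v_j, v_{j+k}$ appear in this cyclic order on the convex hull, so $s$ and $t$ are opposite sides of the quadrilateral $Q$. Now look at the four connecting edges $\{v_i,v_j\}, \{v_i, v_{j+k}\}, \{v_{i+k}, v_j\}, \{v_{i+k}, v_{j+k}\}$: the first and last are the remaining two sides of $Q$, while $\{v_i, v_{j+k}\}$ — wait, with this cyclic order the diagonals of $Q$ are $\{v_i, v_j\}$ and $\{v_{i+k}, v_{j+k}\}$, and the non-$s$, non-$t$ sides are $\{v_{i+k}, v_j\}$ and $\{v_i, v_{j+k}\}$. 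The two diagonals cross each other. In $F_s$ (centers $v_i, v_{i+k}$, spanning), the edge $\{v_i, v_j\}$ goes to the star at $v_i$ and $\{v_{i+k}, v_{j+k}\}$ to the star at $v_{i+k}$ — these are forced since each center can only reach vertices via edges incident to it — so $F_s$ contains both diagonals of $Q$, which cross: contradiction with $F_s$ being plane. Here I would be careful that these two diagonal edges genuinely must lie together in $F_s$ and not split off to $F_t$ or elsewhere: each edge $\{v_i, v_j\}$ is incident to $v_i$ and $v_j$; in a broken double star decomposition the forest containing a given edge is the one whose center-pair includes an endpoint, and $F_s$ (centers $v_i,v_{i+k}$) versus $F_t$ (centers $v_j,v_{j+k}$) both qualify for $\{v_i,v_j\}$, so I need the spanning/balanced structure plus a short parity or counting argument — essentially the argument already used in the proof of Lemma \ref{lemma: matchinimpliesbrokendoublestars} — to pin down which of the two connecting edges at each pair goes where. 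The main obstacle is exactly this bookkeeping: translating ``broken double star'' into a precise statement about which of the four connecting edges between two matching pairs lands in which of the two relevant forests, and then checking that in every combinatorially possible assignment one of $F_s$, $F_t$ is forced to contain a crossing pair (two opposite sides of $Q$ cannot both be in one star centered at a vertex of the other side, and the diagonals cross, so a case analysis on the $2\times 2$ choices closes it). Everything else is routine convex-position geometry.
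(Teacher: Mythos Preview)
Your approach matches the paper's: reduce to two matching segments whose four endpoints are in convex position, invoke the broken-double-star structure (Theorem~\ref{thm: unique star-forests}) to obtain spanning two-component forests $F_s$ and $F_t$ centered at the respective endpoint pairs, and show that one of them must contain the two crossing diagonals of the quadrilateral. The bookkeeping you flag as the ``main obstacle'' is shorter than you fear: since both $F_s$ and $F_t$ are spanning and each of the four connecting edges lies in exactly one of them, there are only two consistent assignments (not $2\times 2$), and each puts the diagonal pair $\{ac,bd\}$ in one forest and the side pair $\{ad,bc\}$ in the other --- so in either case one forest is non-plane, which is exactly the paper's two-case split.
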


\begin{proof}
    Assume on the contrary that there are two line segments $ab$ and $cd$ which are not in a stabbing position. Then the convex hull of $\{a,b,c,d\}$ forms a convex quadrangle. Assume that the cyclic ordering of vertices along the convex hull is $(a,b,c,d)$. The two diagonals of this quadrangle intersect. Assume that $\mathcal{L}$ is SF-extendable. If the star-forest with centers in $a$ and $b$ contains the edges $\{a,c\}$ and $\{b,d\}$. Then the star-forest with centers in $c$ and $d$ is not planar, contradicting the assumption that $\mathcal{L}$ is SF-extendable. If the star-forest with centers in $a$ and $b$ contains the edges $\{a,d\}$ and $\{b,c\}$ then it is not planar, again contradicting the assumption that $\mathcal{L}$ is SF-extendable.
\end{proof}

Let $L_1$ be a segment of length $1$ with center at the origin. Call its left endpoint $a_1$ and the right endpoint $b_1$. Let $a_1,a_2$,\dots,$a_k,b_1$ be vertices of some convex $(k+1)$-gon $P$ such that $L_1$ is an edge of $P$ and each $a_i$ for $i>1$ is placed inside of the top left quadrant of the plane. 
We will now construct line segments $L_2,\dots L_k$ with endpoints $a_i,b_i$ respectively. For each $i>1$ place each $b_i$ in the intersection of interiors of all of the triangles $(a_l,b_l,a_j)$ where $l<j<i$ and the top right quadrant of the plane. We call such a line arrangement a $k$-staircase. See Figure \ref{fig: stairs} for examples of a $3$-staircase and a $4$-staircase and their extension into a star-forest decomposition. 

\begin{theorem}\label{thm: staircase}
    For each $k\ge 1$, there exists an SF-extendable arrangement of $k$ line segments.
\end{theorem}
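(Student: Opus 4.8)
The plan is to prove that the $k$-staircase defined just above is SF-extendable, by explicitly describing the $k+1$ plane star-forests and checking planarity. I would proceed by induction on $k$, since the construction of a $k$-staircase visibly contains a $(k-1)$-staircase as a sub-arrangement (the segments $L_1,\dots,L_{k-1}$), and the extra segment $L_k$ is placed in a carefully prescribed region.

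First I would set up the combinatorics of the intended decomposition. The matching star-forest is $F_0 = \{L_1,\dots,L_k\}$. For each $i$, the star-forest $F_i$ has centers $a_i$ and $b_i$ (the two endpoints of $L_i$), and together these two stars must use exactly the $2k-2$ edges from $\{a_i,b_i\}$ to the remaining $2k-2$ vertices. So the content of the proof is: (a) decide, for each $i$ and each other vertex $v$, whether $v$ is attached to $a_i$ or to $b_i$ in $F_i$; (b) show that with the right choice, each $F_i$ is plane; (c) confirm that across all $i$ every non-matching edge of $K_{2k}$ is covered exactly once (this last point is automatic once we know each edge $\{x,y\}$ with $x\in L_i$, $y\in L_j$, $i\neq j$, is assigned to exactly one of $F_i,F_j$, which is forced by the center structure). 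The natural rule, suggested by the staircase geometry, is to route edges "upward/leftward from the $b$'s and downward/rightward from the $a$'s", i.e. in $F_i$ the center $b_i$ (upper-right region) collects the vertices that lie to its lower-right, and $a_i$ collects the rest; I would make this precise using the quadrant placement and the nested-triangle condition.

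Then comes the inductive step, which I expect to be the main obstacle. Assume $L_1,\dots,L_{k-1}$ with star-forests $F_0',F_1',\dots,F_{k-1}'$ already form a valid extension of the $(k-1)$-staircase. Adding $L_k = a_kb_k$ does three things: it extends the matching ($F_0 = F_0'\cup\{L_k\}$), it adds a brand-new star-forest $F_k$ with centers $a_k,b_k$ spanning all previous $2k-2$ vertices plus the edge $a_kb_k$ is in $F_0$ not $F_k$, and it adds two new edges $\{a_k,x\}$ or $\{b_k,x\}$ into each old forest $F_i$ (namely the two edges from $\{a_i,b_i\}$ to $\{a_k,b_k\}$). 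I must check: (i) $F_k$ is plane — this is where the placement of $b_k$ inside $\bigcap_{l<j<k}\triangle(a_l,b_l,a_j)$ and of $a_k$ as a convex-polygon vertex is used; intuitively $a_k$ sees all the $a_l,b_l$ without crossings because it is an extreme point of the convex polygon $P$, and $b_k$ sees all of them without crossings because it lies deep inside the staircase's "pocket"; and (ii) adding the two new edges to each old $F_i$ keeps $F_i$ plane — here the stabbing/nesting conditions guarantee that the new edge stays inside the triangle region already "owned" by the relevant center. Making (i) and (ii) rigorous requires carefully stating which triangle each new edge lands in and invoking convexity of $P$ together with the defining intersection-of-triangles property of the $b_i$'s.

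I would organize the write-up as: (1) state the explicit assignment rule for $F_i$; (2) verify the edge-count/coverage bookkeeping; (3) prove the planarity of $F_k$ using the convex position of $a_1,\dots,a_k,b_1$ and the pocket containment of $b_k$; (4) prove that extending each $F_i$ by its two new edges preserves planarity, using Lemma~\ref{lem: necessary}'s stabbing structure to locate the new edges; (5) conclude by induction with base case $k=1$ (a single segment, trivially SF-extendable) or $k=2$ (two stabbing segments, direct check). The delicate part throughout is keeping the geometric case analysis finite and clean — I would lean on the order-type stability of the construction so that all crossing checks reduce to checking a fixed small number of triangle-containment relations rather than ad hoc coordinate computations, and I expect step (4) to be where most of the care is needed, since it is the step that genuinely uses the full strength of the nested placement of the $b_i$'s rather than just pairwise stabbing.
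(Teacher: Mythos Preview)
Your approach is viable but more elaborate than what the paper does. The paper gives a direct, non-inductive proof: it simply writes down the edge set of the star-forest $F_i$ with centers $a_i,b_i$ as
\[
\{\{a_i,a_j\}: j>i\}\cup \{\{a_i,b_l\}: l<i\} \cup \{\{ b_i,b_j\}: j>i\} \cup \{\{b_i,a_l\}: l<i\},
\]
so the assignment rule is purely \emph{index-based} (higher-index $a$'s and lower-index $b$'s attach to $a_i$; higher-index $b$'s and lower-index $a$'s attach to $b_i$) rather than stated in geometric terms. Coverage is then a one-line check, and planarity of each $F_i$ is verified directly from two facts about the staircase: the $y$-axis separates all $a$-points from all $b$-points, and the line through $a_i,b_i$ separates the vertices of index below $i$ from those of index above $i$. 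Those two separating lines dispose of all potential crossings in a couple of sentences.

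Your inductive scheme would also succeed, but the work in your step~(4)---showing each old $F_i$ remains plane after inserting its two new edges to $a_k,b_k$---is essentially the same case analysis the paper does once and for all, so the induction buys no real economy. The main thing you are missing is the clean combinatorial assignment rule: once you see that the rule depends only on whether the other segment's index is above or below $i$, and that the line through $a_i$ and $b_i$ realises exactly this dichotomy geometrically, the argument collapses to a short direct verification. In particular you do not need to invoke Lemma~\ref{lem: necessary} or the full nested-triangle containment for planarity; that nesting is only used to ensure each $b_l$ lands on the correct side of every line $a_ib_i$.
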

\begin{proof}
    We will show that a $k$-staircase is SF-extendable. The star-forest with centers in $a_i,b_i$ has the edgeset $$\{\{a_i,a_j\}: j>i\}\cup \{\{a_i,b_k\}: k<i\} \cup \{\{ b_i,b_j\}: j>i\} \cup \{\{b_i,a_k\}: k<i\}.$$
    It is clear that every edge will be covered by this decomposition. Now we will check planarity of the forest with centers $a_i,b_i$. Edges of the form $\{a_i,a_j\}$ for $j>i$ cannot cross edges of the form $\{b_i,b_j\}$ since $a$ points are divided from $b$ points by the $y$-axis. Edges of the form $\{a_i,a_j\}$ cannot cross edges of the form $\{b_i,a_k\}$ where $k<i\le j$ since $a_j,a_k$ are in different half-planes determined by a line through $a_i,b_i$.  The other cases are similar. 
\end{proof}

\begin{theorem}\label{thm: optimalna}
    For each $n$ there exists a complete geometric graph on $n$ points which can be decomposed into $\lfloor \frac{n}{2}\rfloor +1$ plane star-forests. 
\end{theorem}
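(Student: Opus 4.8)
The plan is to split on the parity of $n$, with essentially all of the construction already packaged in Theorem~\ref{thm: staircase}.

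\emph{Even $n=2k$.} Here nothing remains to be done: by Theorem~\ref{thm: staircase} a $k$-staircase is SF-extendable, so the complete geometric graph on its $2k$ endpoints decomposes into $k+1=\frac{n}{2}+1$ plane star-forests --- the matching carried by the arrangement together with the $k$ two-component star-forests with centres $\{a_i,b_i\}$ exhibited in the proof of Theorem~\ref{thm: staircase}. The only point to check is that the $2k$ endpoints of a $k$-staircase lie in general position, and this is forced by the inductive placement of the points $b_i$ inside open intersections of triangles with a quadrant. (By Theorem~\ref{thm: unique star-forests} the matching in such a decomposition is unavoidable, and by Lemma~\ref{lem: necessary} the segments of that matching must pairwise stab, so the staircase shape is in effect the only thing one could have tried.)

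\emph{Odd $n=2k+1$.} I would take a point set $P$ realising the $k$-staircase (so $|P|=n-1$) together with the $k+1$ plane star-forests of $GP$ from the even case, and adjoin one further point $q$ lying off every line spanned by two points of $P$, so that $P\cup\{q\}$ is in general position. The $n-1$ segments from $q$ to the points of $P$ all have the common endpoint $q$, hence are pairwise non-crossing and form a single plane star $K_{1,n-1}$; these are precisely the edges of the complete geometric graph on $P\cup\{q\}$ that the decomposition of $GP$ does not cover. Adding this star as one more star-forest therefore yields a decomposition of the complete geometric graph on $n=2k+1$ points into $k+2$ plane star-forests, and by the Akiyama--Kano bound \cite{akiyama} even the abstract $K_n$ requires at least $\lceil n/2\rceil+1$ star-forests, so the construction is best possible in both parities.

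I expect no real obstacle: once Theorem~\ref{thm: staircase} is available the construction is almost free, and the remaining verifications --- general position of the staircase point set and of its one-point augmentation, and the immediate observation that a geometric star, having all edges incident to its centre, is automatically plane --- are routine. All the difficulty of the theorem has been pushed into the construction of SF-extendable arrangements of every size, whose form is in turn dictated by the stabbing condition of Lemma~\ref{lem: necessary}.
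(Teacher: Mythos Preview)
Your argument is correct, and for even $n$ it coincides with the paper's. For odd $n$ you take a genuinely different route: the paper writes $n=2k-1$, starts from the $k$-staircase on $n+1$ points, and \emph{deletes} one vertex, so that each of the $k+1$ plane star-forests simply loses the edges through that vertex; you instead write $n=2k+1$, start from the $k$-staircase on $n-1$ points, and \emph{add} one vertex $q$ together with the plane star $K_{1,n-1}$ centred at $q$. Both constructions land at $k+2=\lceil n/2\rceil+1$ plane star-forests. Your version has the advantage that the extra forest is visibly plane (all edges share the endpoint $q$), while the paper's deletion argument avoids introducing a new forest at all and keeps the decomposition within the staircase family.

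One remark worth making explicit (you gesture at it with the Akiyama--Kano citation): for odd $n$ the bound $\lfloor n/2\rfloor+1$ printed in the statement is below $\lceil n/2\rceil+1$ and hence unattainable even abstractly, so the intended quantity is $\lceil n/2\rceil+1$. Neither your construction nor the paper's reaches $\lfloor n/2\rfloor+1$ in the odd case, and neither could; your appeal to \cite{akiyama} correctly identifies $\lceil n/2\rceil+1$ as the true target and shows your construction is optimal.
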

\begin{proof}
    If $n=2k$ is even, the complete geometric graph is given by the $k$-staircase. In the case $n=2k-1$, take a single point away from the $k$-staircase and the resulting complete geometric graph gives the result. 
\end{proof}

\begin{figure}
    \centering
    \includegraphics[scale=0.23415]{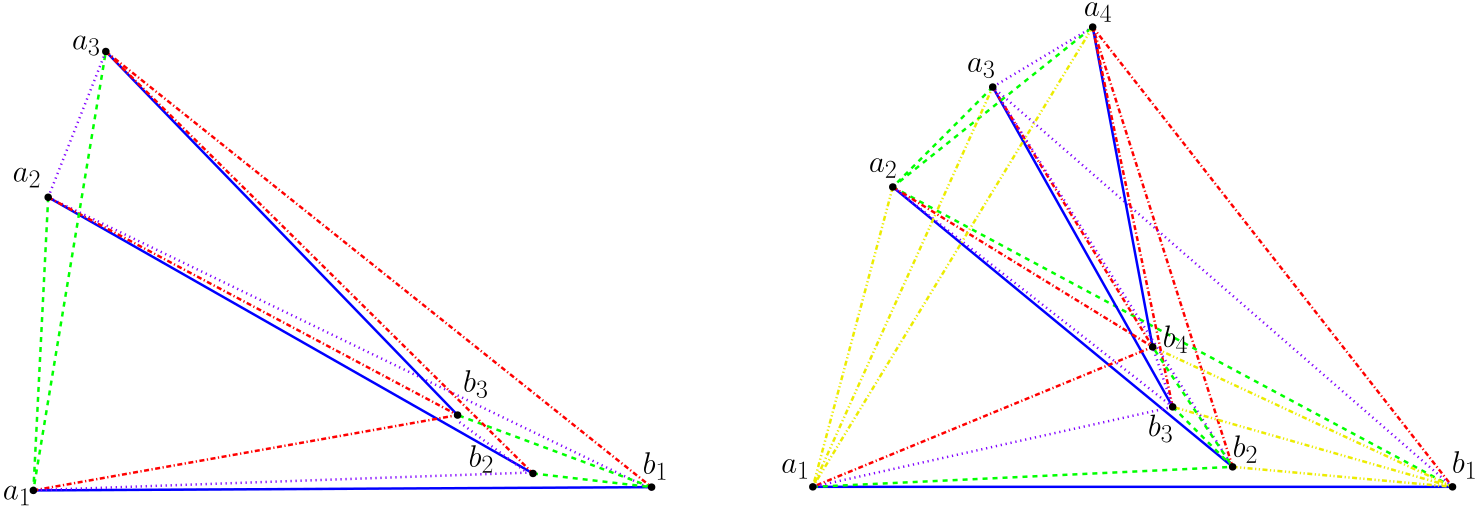}
    \caption{Point sets in 3-staircase and 4-staircase configurations.}
    \label{fig: stairs}
\end{figure}

The $k$-staircase produces a point set with a fairly large convex hull. So a natural question is to ask if this is in fact necessary or can we find point sets with a smaller convex hull. The following construction shows that we can in fact do this. 

We will again construct an arrangment of line segments. Start with a segment of length 1 centered at the origin, and call its endpoints $a_1,b_1$ as before. Now place points $a_2,\dots ,a_k$ in such a way that each $a_i$ has a smaller $x$ and larger $y$ coordinate than $a_{i-1}$. Then place $b_2,\dots,b_k$ so that they obey the same conditions as in the construction of the $k$-staircase. This way we obtain an arrangement of line segments $L_1,\dots, L_k$ which is SF-extendable which we will call $k$-comet. To prove this fact we can use the same star-forest decomposition as we did in the proof of Theorem \ref{thm: staircase}. However, where $k$-staircase defines a point set on $2k$ points with convex hull of size $k+1$, the $k$-comet defines a point set on $2k$ points with convex hull of size $3$, see Figure \ref{fig: comet}. 

In fact it is not hard to construct such a point set with a convex hull of any size between $3$ and $k+1$. First note that the $k$-comet is "obtained" from a $k$-staircase by turning a convex polygonal line into a concave one. We say "obtained" since one also needs to adjust the positions of $b_i$'s in the construction. Now we can do the same thing but turn only an initial segment of the convex polygonal line into a concave segement and obtain a smaller convex hull.
Even further, one can make the following observation.

\begin{observation}\label{obs: sizeofconvhulls}
Let $k\ge 2$. If $G$ is a complete geometric graph on $n=2k$ vertices which can be decomposed into $k+1$ plane star-forests, then the size of the convex hull of $V(G)$ is at most $k+1$.
\end{observation}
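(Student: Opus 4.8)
The plan is to combine Theorem~\ref{thm: unique star-forests} with a geometric analysis of the perfect matching that any optimal decomposition must contain. By Theorem~\ref{thm: unique star-forests}, any decomposition of $K_{2k}$ into $k+1$ star-forests is a broken double stars decomposition, so one of the star-forests, say $F_0$, is a perfect matching $M$ on all $2k$ vertices, and for every edge $e=\{u,v\}\in M$ there is a spanning two-component star-forest $F_e$ whose stars are centered at $u$ and $v$. Since the decomposition is plane, each $F_e$ is drawn without crossings, and $M$ itself (being one of the forests) is drawn without crossings. The key structural fact I would extract is a strengthening of Lemma~\ref{lem: necessary}: since $F_e$ is \emph{spanning} and plane, no edge of $M$ other than $e$ can have an endpoint that is "separated" from the centers $u,v$ in a way forcing a crossing. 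Concretely, I would argue that for any two matching edges $e=\{u,v\}$ and $f=\{x,y\}$, not only are they in stabbing position (Lemma~\ref{lem: necessary}), but in fact one of $e,f$ must stab the other: the convex hull of $\{u,v,x,y\}$ is a triangle with one of the four points in the interior.

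The main step is then the following counting/convexity argument. Suppose for contradiction that the convex hull of $V(G)$ has at least $k+2$ vertices. Since $M$ is a perfect matching on the $2k$ points, among any set of hull vertices at least $\lceil (k+2)/1\rceil$... more carefully: if there were $k+2$ points on the convex hull, then because $M$ has only $k$ edges and each edge covers at most $2$ hull vertices, but more to the point, consider the at least $k+2$ hull vertices; the $k$ matching edges can have at most $k$ edges with both endpoints on the hull, but there are more than $k$ hull vertices so by pigeonhole some matching edge $e=\{u,v\}$ has \emph{both} endpoints $u,v$ on the convex hull while there are at least two other hull vertices; alternatively at least two matching edges join a hull vertex to a hull vertex. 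I would pick a matching edge $e=\{u,v\}$ with both endpoints on the hull and two further hull vertices $x,y$ lying on the same side, i.e. with $x,y$ on the hull and the matching edge(s) through them; then the edge $e$ and the point $x$ (or the edge $f\ni x$) cannot be in stabbing position because all of $u,v,x$ are hull vertices, so their convex hull is a triangle with empty interior among these points—this is where I combine it with the spanning requirement to derive a crossing in $F_e$ or $F_f$.

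So the cleanest route: if $|{\rm conv}(V(G))|\ge k+2$, then since $M$ is a perfect matching with $k$ edges, at least two edges of $M$ have both endpoints on the convex hull (because $2k$ points, $\ge k+2$ on the hull, $k$ edges, and an edge with both endpoints on the hull "uses up" two hull slots, so the number of all-hull edges is at least $(k+2)-k=2$, counting hull vertices not covered by all-hull edges as needing their own matching partner off-hull). Take two such edges $e=\{u,v\}$ and $f=\{x,y\}$, all four points on the convex hull. Four points on a convex hull are in convex position, hence $e$ and $f$ are \emph{not} in stabbing position, contradicting Lemma~\ref{lem: necessary} (applied to the matching $M=F_0$, which is SF-extendable by hypothesis since $G$ admits the full decomposition). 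This contradiction yields $|{\rm conv}(V(G))|\le k+1$.

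The main obstacle I anticipate is making the pigeonhole precise: showing that "$\ge k+2$ hull vertices" forces "$\ge 2$ matching edges with both endpoints on the hull." Let $h$ be the number of hull vertices and let $t$ be the number of matching edges with both endpoints on the hull; the remaining $h-2t$ hull vertices are each matched to an interior vertex, consuming $h-2t$ interior vertices, so $2t+(h-2t)\le h$ hull vertices and $h-2t\le 2k-h$ interior vertices, giving $2h-2t\le 2k$, i.e. $t\ge h-k\ge 2$ when $h\ge k+2$. This is the crux, and it is short; the rest is just quoting Lemma~\ref{lem: necessary} and the elementary fact that four points on a convex hull are in convex position, hence no two of the segments spanned by disjoint pairs are in stabbing position.
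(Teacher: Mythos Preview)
Your final argument is correct and is essentially the paper's own proof: invoke Theorem~\ref{thm: unique star-forests} to get a perfect matching among the $k+1$ forests, use pigeonhole to find two matching edges with both endpoints on the convex hull when the hull has size at least $k+2$, observe that four hull points are in convex position so those two segments are not in a stabbing position, and contradict Lemma~\ref{lem: necessary}. The paper states the pigeonhole step in one line (``the matching needs to use at least $2$ edges of the convex hull''), whereas you spell out the count $t\ge h-k\ge 2$ explicitly; this is a welcome clarification but not a different idea.

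One remark on presentation: the detour in your first two paragraphs about a ``strengthening of Lemma~\ref{lem: necessary}'' (that one of $e,f$ must actually stab the other) is never used and should be cut; the bare Lemma~\ref{lem: necessary} already suffices, as you yourself discover by the third paragraph.
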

\begin{proof}
    By Theorem \ref{thm: unique star-forests} we know that one of the star-forests will be a perfect matching. If $V(G)$ has a convex hull of size at least $k+2$, then the matching needs to use at least $2$ edges of the convex hull. But then the matching cannot be SF-extendable by Lemma \ref{lem: necessary} since the edges of the convex hull are never in a stabbing position. 
\end{proof}

Based on this we make the following conjecture. 
\begin{conjecture} \label{conj: sizeofconvhulls}
    Let $n\ge 3$ be odd, $G$ a complete geometric graph on $n$ vertices which can be decomposed into $\lceil\frac{n}{2}\rceil+1$ plane star-forests. Then the convex hull of $V(G)$ has size at most $\lceil\frac{n}{2}\rceil+1$.  
\end{conjecture}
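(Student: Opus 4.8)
The plan is to prove the statement by induction on the odd integer $n$, reducing it to the even case handled by Observation \ref{obs: sizeofconvhulls}. Write $n=2k-1$, so that $\lceil n/2\rceil+1=k+1$, and fix a decomposition of $G$ into plane star-forests $F_0,\dots,F_k$. Since deleting one point from a planar point set decreases the size of its convex hull by at most one, it suffices to produce a vertex $v$ for which $G-v$ admits a decomposition into $k$ plane star-forests: then $G-v$ is a complete geometric graph on $2(k-1)$ vertices decomposed into $(k-1)+1$ plane star-forests, so Observation \ref{obs: sizeofconvhulls} gives $|\mathrm{conv}(V(G-v))|\le k$ and hence $|\mathrm{conv}(V(G))|\le k+1$. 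The small odd values of $n$, for which this bootstrapping is not available, would be checked separately, as with Observation \ref{obs: truefor6}.

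The first step is routine. No $F_i$ can be edgeless, since otherwise the remaining $k$ forests would decompose $K_{2k-1}$, contradicting Akiyama and Kano. If some $F_i$ has a unique component carrying an edge -- in particular, if $F_i$ is a spanning star -- let $v$ be the centre of that star; deleting $v$ removes all of $E(F_i)$ together with the edges at $v$ in the other forests, so $F_i$ becomes edgeless while every $F_j$ with $j\neq i$ stays a plane star-forest, and these $k$ forests decompose $K_{2k-2}$, so the reduction above applies. Hence we may assume that every $F_i$ is a union of at least two stars each carrying an edge. A short edge count then forces $\sum_i c(F_i)=4k-2$, where $c(F_i)$ denotes the number of components of $F_i$, and a similar argument shows that now no single vertex deletion can lower the number of edgeful forests below $k+1$. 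Thus the inductive reduction is genuinely unavailable in this case, and this is where the entire difficulty of the conjecture lies.

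For this remaining case one would like to imitate the crossing argument behind Observation \ref{obs: sizeofconvhulls}, but the object that drives it there -- a perfect matching forest, at most one of whose edges can lie on the convex hull by Lemma \ref{lem: necessary} -- is no longer present, and two vertex-disjoint convex-hull edges inside a single $F_i$ is by itself no contradiction, since such edges do not cross. The natural strategy is therefore to establish an odd counterpart of the rigidity behind Theorem \ref{thm: unique star-forests}: strong structural constraints on the $(k+1)$-fold star-forest decompositions of $K_{2k-1}$, showing that if the convex hull has at least $k+2$ vertices then some forest $F_i$ is forced to be so tightly constrained -- spanning, a union of at least two stars, with centres prescribed relative to the hull -- that two of its edges must cross, a contradiction. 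I expect the missing structure theorem, together with the passage from the stabbing obstruction of Lemma \ref{lem: necessary} to an honest crossing, to be the main obstacle: in the absence of a distinguished matching forest there is no obvious object around which to organise the case analysis, which is presumably why the statement is only conjectured here.
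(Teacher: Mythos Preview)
The statement you are attempting is a \emph{conjecture} in the paper, not a theorem; the paper offers no proof and explicitly leaves it open. So there is no ``paper's own proof'' to compare your proposal against.

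Your write-up is not a proof either, and you say as much: after the easy reduction step (deleting the centre of a single-star forest and invoking Observation~\ref{obs: sizeofconvhulls} on the resulting $K_{2k-2}$), you correctly observe that the remaining case---every $F_i$ has at least two edge-carrying stars---cannot be handled by a single vertex deletion, and you then describe what a proof \emph{would} need without supplying it. The partial observations you make along the way are sound: the hull-size inequality $|\mathrm{conv}(V(G))|\le |\mathrm{conv}(V(G-v))|+1$ holds because every extreme point of $V(G)$ other than $v$ remains extreme in $V(G)\setminus\{v\}$; the edgeless-forest and single-star cases are ruled out exactly as you say; and the component count $\sum_i c(F_i)=4k-2$ is correct when each $F_i$ is viewed as a spanning forest on $2k-1$ vertices.

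But none of this advances beyond what the paper already contains. Your final paragraph is an accurate diagnosis of why the odd case is genuinely harder than the even one---there is no perfect-matching forest to which Lemma~\ref{lem: necessary} can be applied, and no odd analogue of Theorem~\ref{thm: unique star-forests} is known---and that diagnosis is precisely why the authors state it as a conjecture. As it stands, your proposal is a reasonable discussion of the obstacles, not a proof.
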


What is not clear is if one can construct point sets with convex hull of size $k+1$ that are not a $k$-staircase. Our computations on point sets with $6$ and $8$ points found no such point sets. Thus we make the following conjecture. 

\begin{conjecture}\label{conj: ours}
    If a complete geometric graph $G$ on $2k$ points admits a decomposition into $k+1$ plane star-forests and the size of the convex hull of $V(G)$ is $k+1$ then $V(G)$ can be described as a $k$-staircase.
\end{conjecture}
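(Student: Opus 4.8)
# Proof Proposal for Conjecture \ref{conj: ours}

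The plan is to prove the conjecture by induction on $k$, leveraging the rigidity already established for abstract decompositions (Theorem \ref{thm: unique star-forests}) together with the geometric constraint from Lemma \ref{lem: necessary}. By Theorem \ref{thm: unique star-forests}, the decomposition of $GV(G)$ consists of a perfect matching $M$ together with $k$ broken double stars whose centers are the endpoints of the edges of $M$. By Observation \ref{obs: sizeofconvhulls}, the hull has size exactly $k+1$, so among the $2k$ points, exactly $k+1$ lie on the convex hull and $k-1$ are interior. Since each matching edge is a stabbing pair (Lemma \ref{lem: necessary}) and two hull edges are never in stabbing position, the matching uses at most one hull edge; a counting argument (each of the $k-1$ interior points is matched, and there are $k$ matching edges on $2k$ points with $k+1$ on the hull) should force exactly one matching edge to join two hull vertices while every other matching edge joins a hull vertex to an interior vertex. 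That distinguished edge is the natural candidate for $L_1$.

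The core of the argument is then to peel off one segment and recurse. First I would identify the ``outermost'' matching edge --- the one whose removal of its two endpoints leaves a complete geometric graph on $2(k-1)$ points still admitting a decomposition into $k$ plane star-forests (namely, restrict the remaining broken double stars). One must check that the removed pair is the pair of centers of one of the broken double stars (this is exactly the situation analyzed in the proof of Theorem \ref{thm: unique star-forests}), so that what remains is again a broken double star decomposition of $K_{2k-2}$ with its matching being $M$ minus one edge. Then I would verify that the hull of the smaller point set still has size $k$: removing the two chosen points (which should be consecutive-ish on the hull, or one hull and one interior) must drop the hull size by exactly one. Applying the inductive hypothesis gives that the remaining $2(k-1)$ points form a $(k-1)$-staircase. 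It then remains to show that the two removed points $a_k, b_k$ (say) attach to this $(k-1)$-staircase exactly in the way the $k$-staircase construction prescribes: $b_k$ lies in the intersection of the triangle interiors $(a_l, b_l, a_j)$ for $l < j < k$ and in the appropriate quadrant, while $a_k$ extends the convex polygonal chain $a_1, a_2, \dots, a_{k-1}$.

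The main obstacle, which I expect to require the most care, is pinning down the exact position of the newly attached pair from the planarity constraints alone. Knowing the smaller point set is a $(k-1)$-staircase tells us the order type of $2k-2$ of the points, but we must deduce the order type of $a_k$ and $b_k$ relative to all of them purely from the fact that the $k$ broken double stars are plane. The relevant leverage is the broken double star whose centers are $a_k, b_k$: it is spanning (Lemma \ref{lemma: matchingimpliesspanning}), so $a_k$ is joined to roughly half the remaining points and $b_k$ to the other half, and planarity of this forest together with planarity of the $k-1$ other forests (each of which must now accommodate edges to $a_k$ and $b_k$ without crossings) should force $b_k$ into precisely the ``stabbing cone'' determined by the earlier segments and force $a_k$ onto the convex side. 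A subtlety is that the staircase construction has a built-in left/right asymmetry between the $a$-chain and the $b$-points (the $a_i$ spread out convexly while the $b_i$ nest into a shrinking region), so one must argue that the decomposition cannot be realized with the roles reversed or mixed; this likely follows from tracking which endpoint of $L_1$ plays the role of $a_1$ versus $b_1$ and using that the hull edge used by the matching is unique. I would also need to handle the base case $k=2$ (or $k=3$) directly, and address the degenerate possibility that the removed matching edge could be chosen in more than one way, showing all such choices lead to the same staircase labeling up to the natural symmetry.
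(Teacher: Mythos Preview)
The statement you are attempting to prove is Conjecture~\ref{conj: ours} in the paper, not a theorem: the authors explicitly leave it open, supported only by computer search on point sets with $6$ and $8$ points. There is therefore no proof in the paper to compare your attempt against. What you have written is a proposed attack on an open problem.

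As an attack, your outline identifies the right structural ingredients (Theorem~\ref{thm: unique star-forests} forces a broken double star decomposition; Lemma~\ref{lem: necessary} forces stabbing; the hull-size count forces exactly one matching edge on the hull), and the peel-and-recurse strategy is natural. But you yourself flag the genuine gap and do not close it: after the inductive hypothesis hands you a $(k-1)$-staircase on the remaining $2k-2$ points, you must pin down the order type of the two removed points $a_k,b_k$ relative to \emph{all} the others purely from planarity of the $k$ broken double stars, and you give no argument for this beyond saying it ``should force'' the right positions. This is exactly the content of the conjecture. A second unverified step is that removing your chosen pair drops the hull size from $k+1$ to exactly $k$; you assert this is what ``must'' happen but do not argue it, and without it the induction hypothesis does not apply. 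Until those two steps are carried out, the proposal is a plausible plan rather than a proof.
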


\begin{figure}
    \centering
    \begin{tabular}{cc}
    
    \includegraphics[scale=0.52]{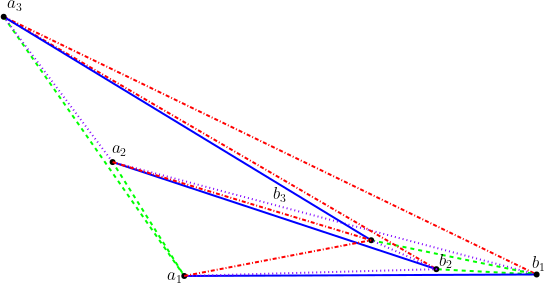} &
   
    \end{tabular}
    \caption{Point set in 3-comet configuration.}
    \label{fig: comet}
\end{figure}

\section{Computing Plane Star-Forest Decompositions on Small Point Sets}

Using a simple computer search, we managed to find all point sets on $6$ points that can be decomposed into $4$ plane star-forests and all point sets on $8$ points that can be decomposed into $5$ plane star-forests. Out of the $16$ order types on $6$ points,  which can be found on \cite{pointsets}, we have found decompositions which satisfy the requirements from Theorem \ref{thm: glavna}  for $6$ of them. Those point sets and corresponding partitions can be seen in Figure \ref{fig: decompostitions}. Out of $3315$ order types on $8$ points, we have found such decompositions for $411$ of them. 
The code is available at \cite{github}. We plan to continue improving the code to be able to perform the search on bigger point sets. Currently, the generation of appropriate decompositions is very slow, and since the number of edges and corresponding decompositions grows very fast, we are not able to perform the checks for bigger point sets. Of course, for even bigger point sets, a completely different approach would be needed, as the number of different order types for $n\ge 12$ becomes too large to handle. For this, using SAT solvers might be useful to try and resolve Conjectures \ref{conj: sizeofconvhulls} and \ref{conj: ours}.

\begin{figure}[h]
    \centering
    \begin{tabular}{ccc}
        \includegraphics[width=0.3\linewidth]{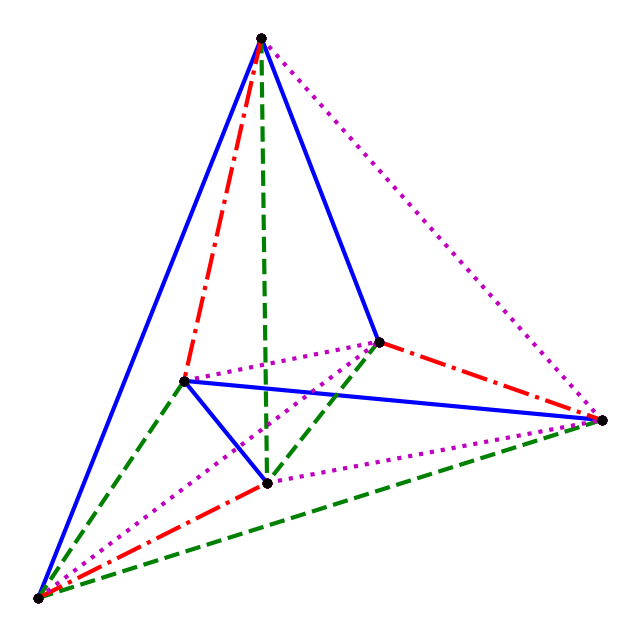} & \includegraphics[width=0.3\linewidth]{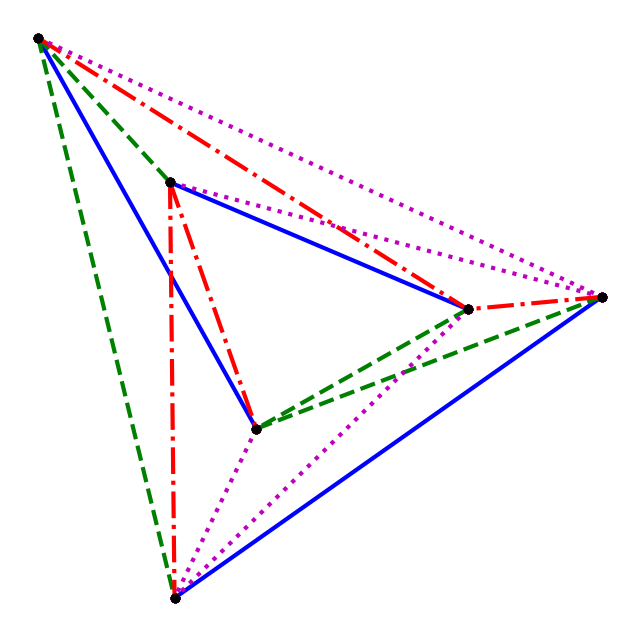} & \includegraphics[width=0.3\linewidth]{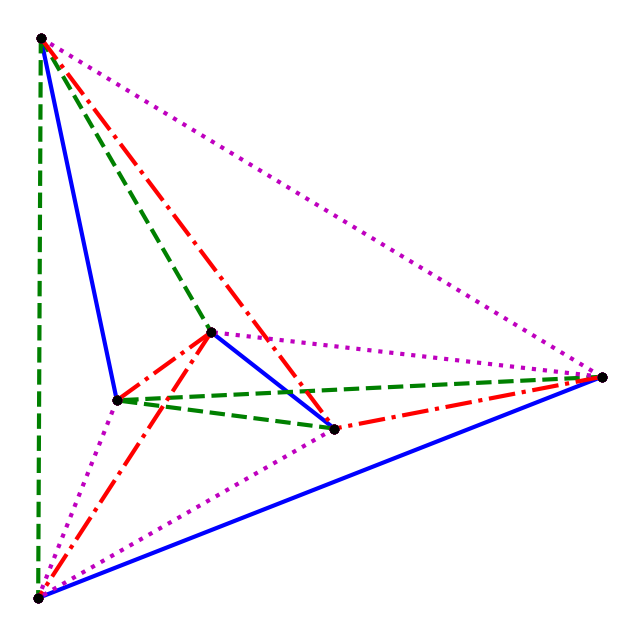} \\[6pt]
        \includegraphics[width=0.3\linewidth]{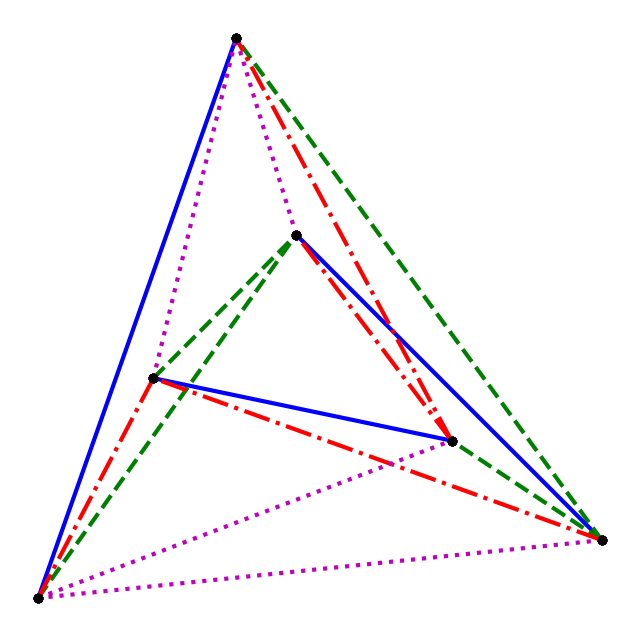} & \includegraphics[width=0.3\linewidth]{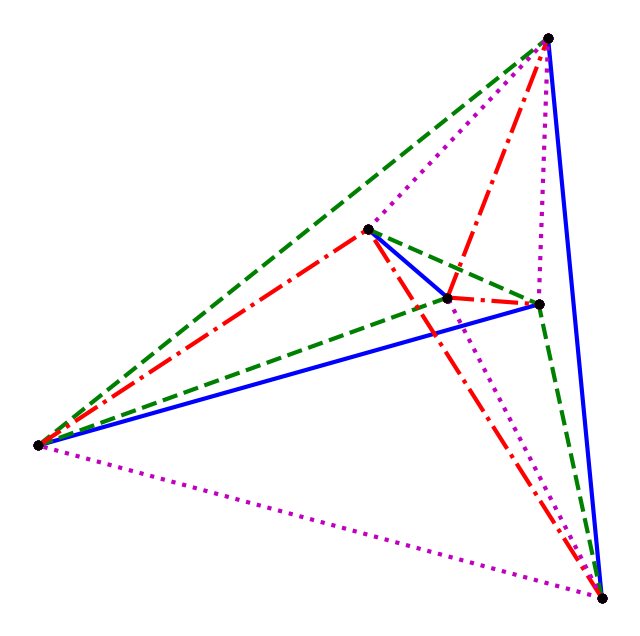} & \includegraphics[width=0.3\linewidth]{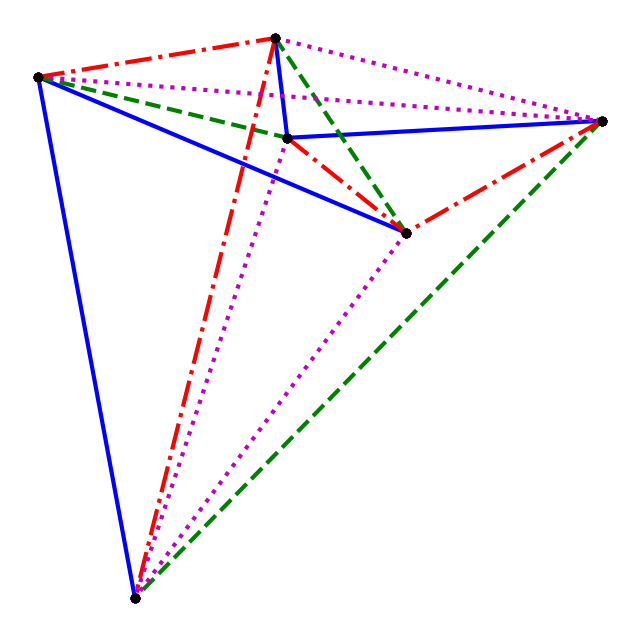} \\[6pt]
    \end{tabular}
    \caption{Star-forest decompostions of the point sets that admit them.}
    \label{fig: decompostitions}
\end{figure}

\section{Further Research and Open Questions}

While our construction is optimal in the sense that it minimizes the size of minimal decomposition into plane star-forests, it is still unclear what characterizes complete geometric graphs that admit such a decomposition. We have made no progress towards solving this problem but computational results show that these point sets can be quite diverse. However, a possibly easier question to answer is the following. 

\begin{question}
    What are sufficient conditions for an arrangement of line segments to be SF-extendable?
\end{question} 

Lemma \ref{lem: necessary} gives us a necessary condition, but it is not hard to see that this is not sufficient to guarantee SF-extendability. 

We also note that there is an interesting variation of the original problem that we have not explored yet but where our approach from Theorem \ref{thm: glavna} can also be used. We define a \emph{k-star-forest} to be a star-forest with at most $k$ components. Authors in \cite{pach} proposed the following conjecture: 

\begin{conjecture}\cite{pach}\label{conj: maybecorrect}
    The number of plane $k$-star-forests needed to decompose a complete geometric graph is at least $\frac{(k+1)n}{2k}$. 
\end{conjecture}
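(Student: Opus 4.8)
Since this is stated as a conjecture, I will only propose an attack, splitting it into establishing that the bound is attained (which looks routine) and the matching lower bound (which is the real content). For the first part I would blow up a small optimal configuration. The $2k$-staircase of Theorem~\ref{thm: staircase} is a complete geometric graph on $2k$ points decomposed into $k+1$ plane star-forests, one of which is a perfect matching --- a star-forest with exactly $k$ components --- and $k$ of which are spanning double stars, with two components each; for $k\ge 2$ these are all plane $k$-star-forests, and every point is the centre of one of the trees, so Theorem~\ref{thm: glavna} applies with $n_0=2k$ and $c=\tfrac{k+1}{2k}\in(\tfrac12,1)$. It produces, for every $j\ge 1$, a complete geometric graph on $2kj$ points decomposed into $(k+1)j=\tfrac{(k+1)\cdot 2kj}{2k}$ plane star-forests, and since the blow-up of Theorem~\ref{thm: glavna} replaces each forest by forests with exactly one star per original centre, the number of components never grows, so these are again plane $k$-star-forests. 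Hence $\tfrac{(k+1)n}{2k}$ is attained whenever $2k\mid n$ (for $k=1$ the statement is, up to an additive constant, the classical fact that $K_n$ needs $n-1$ stars), and this is the first thing I would record so the lower-bound argument knows its target.

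For the lower bound I would combine an edge count with a quantitative strengthening of Theorem~\ref{thm: unique star-forests}. If $F_1,\dots,F_m$ decomposes the complete geometric graph into plane $k$-star-forests and $F_i$ has $s_i$ star components, then $|E(F_i)|\le n-s_i$, so $\binom n2\le mn-\sum_i s_i$, which already gives $m\ge\tfrac{n-1}{2}+\tfrac1n\sum_i s_i$; together with the observation that at most one $F_i$ can be a spanning single star (two spanning stars with distinct centres $c,c'$ would both contain $cc'$) this recovers only the Akiyama--Kano value $\lceil n/2\rceil+1$, with no dependence on $k$. The extra $\Theta(n/k)$ must come from the interaction of the component bound with planarity, and here I would try to push Theorem~\ref{thm: unique star-forests} into a stability version: show that if $m$ is close to $n/2$ then the decomposition is \emph{essentially} a broken double star decomposition, i.e.\ all but a lower-order fraction of its edges are covered by spanning two-component forests, and the rest form (most of) a perfect matching $M$. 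Since $M$ cannot be a single plane $k$-star-forest and a star contains at most one edge of any matching, covering $M$ needs at least $|M|/k\approx n/(2k)$ distinct forests; the crux is then to argue that these cannot be amortised against the spanning double-star forests, which already carry $n-2$ edges and so admit no further edge while remaining forests, so they are genuinely additional, yielding $m\ge\tfrac n2+\Omega(n/k)$. I would tune the constants using the ``at most one spanning single star'' input again and a careful count of the non-spanning forests, and organise the whole thing by induction on $k$ and on $n$, deleting the two centres of a two-component forest exactly as in the proof of Theorem~\ref{thm: unique star-forests}.

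The step I expect to be the main obstacle is precisely this stability statement: that a complete geometric graph decomposed into few plane $k$-star-forests must look, up to the component bound, like a blown-up staircase, so that it inherits a large matching. For points in convex position it is implied by the Pach--Saghafian--Schnider bound \cite{pach} (there the count is already $n-1\ge\tfrac{(k+1)n}{2k}$), but for general position it is exactly the ground covered by the open Conjectures~\ref{conj: sizeofconvhulls} and~\ref{conj: ours}: until we understand which point sets admit economical plane star-forest decompositions, the amortisation step cannot be made rigorous. Two secondary difficulties are pinning down the exact additive constant --- as with the $\lceil n/2\rceil+1$ bound of \cite{akiyama}, the last $O(1)$ usually needs its own parity/degree argument --- and first checking whether the bound already holds for the abstract $K_n$ with $k$-star-forests, since it does for small $n,k$ and, if true in general, would make the conjecture purely combinatorial and bypass the geometric obstacle altogether.
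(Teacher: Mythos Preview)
This statement is a conjecture that the paper does not prove; it is quoted from \cite{pach}, and the authors explicitly say their examples ``do not show anything regarding'' it. The paper's sole contribution in its direction is the one-line remark that the blow-up construction of Theorem~\ref{thm: glavna} preserves the maximal number of components among the forests, so the same machinery could produce upper-bound examples for $k$-star-forests. There is thus no proof in the paper to compare your attempt against.

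Your upper-bound half is precisely that remark, made concrete: you start from the staircase on $2k$ points (you write ``$2k$-staircase'', but in the paper's terminology the $k$-staircase is the one with $k$ segments and $2k$ points), note that its $k+1$ plane star-forests all have at most $k$ components, check that every vertex is a centre so that Theorem~\ref{thm: glavna} applies with $n_0=2k$ and $c=\tfrac{k+1}{2k}$, and observe that the blow-up keeps the component counts. This is correct and is exactly what the paper hints at; you have simply filled in the details the authors omitted.

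Your lower-bound programme --- the edge count giving $m\ge\tfrac{n-1}{2}+\tfrac1n\sum s_i$, the proposed stability strengthening of Theorem~\ref{thm: unique star-forests}, the argument that an approximate matching needs roughly $n/(2k)$ extra forests to be covered, and the link to Conjectures~\ref{conj: sizeofconvhulls} and~\ref{conj: ours} as the real obstruction --- goes well beyond anything in the paper, which makes no attempt at a lower bound. You are right that the edge count alone cannot see $k$, and your diagnosis that the missing piece is a structural/stability statement about near-optimal decompositions is reasonable; you are also honest that this step is currently out of reach. The suggestion to first test whether the bound already holds for abstract $K_n$ (making the problem purely combinatorial) is sensible and not discussed in the paper. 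None of this can be graded against the paper's proof, because there isn't one.
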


Our example does not show anything regarding Conjecture \ref{conj: maybecorrect}. But, it is not hard to see that the construction from Theorem \ref{thm: glavna} preserves the maximal number of components among all forests. Thus, we believe a similar approach could be used to attack this conjecture. 

\section*{Acknowledgments}
This work is supported by project 23-04949X of the Czech Science Foundation
(GA\v{C}R). We also thank Pavel Valtr and Jan Kyn\v{c}l who proposed the problem to us and gave a lot of useful comments and suggestions during the combinatorial problems seminar at Charles University. 
\bibliographystyle{splncs04}
\bibliography{eurocg24_example}

\begin{thebibliography}{10}
\providecommand{\url}[1]{\texttt{#1}}
\providecommand{\urlprefix}{URL }
\providecommand{\doi}[1]{https://doi.org/#1}

\bibitem{pointsets}
Aichholzer, O.: Enumerating order types for small point sets with applications. http://www.ist.tugraz.at/staff/aichholzer/research/rp/triangulations/ordertypes/

\bibitem{akbari}
Akbari, S., Jensen, T.R., Siggers, M.H.: Decompositions of graphs into trees, forests, and regular subgraphs. Discret. Math.  \textbf{338}(8),  1322--1327 (2015)

\bibitem{akiyama}
Akiyama, J., Kano, M.: Path factors of a graph. In: Graphs and applications ({B}oulder, {C}olo., 1982), pp. 1--21. Wiley-Intersci. Publ., Wiley, New York (1985)

\bibitem{github}
Anti{\'{c}}, T., Gli{\v{s}}i{\'{c}}, J., Milivoj{\v{c}}evi{\'{c}}, M.: https://github.com/milivojcevic6/Star-Forest-Decompositions

\bibitem{Bose}
Bose, P., Hurtado, F., Rivera{-}Campo, E., Wood, D.R.: Partitions of complete geometric graphs into plane trees. Comput. Geom.  \textbf{34}(2),  116--125 (2006). \doi{10.1016/J.COMGEO.2005.08.006}, \url{https://doi.org/10.1016/j.comgeo.2005.08.006}

\bibitem{dujmovic}
Dujmovi\'{c}, V., Wood, D.R.: Graph treewidth and geometric thickness parameters. Discrete Comput. Geom.  \textbf{37}(4),  641--670 (2007). \doi{10.1007/s00454-007-1318-7}, \url{https://doi.org/10.1007/s00454-007-1318-7}

\bibitem{lonc}
Lonc, Z.: Decompositions of graphs into trees. J. Graph Theory  \textbf{13}(4),  393--403 (1989)

\bibitem{pach}
Pach, J., Saghafian, M., Schnider, P.: Decomposition of geometric graphs into star forests (2023), \url{https://doi.org/10.48550/arXiv.2306.13201}

\bibitem{welzl}
Pilz, A., Welzl, E.: Order on order types. Discret. Comput. Geom.  \textbf{59}(4),  886--922 (2018). \doi{10.1007/S00454-017-9912-9}, \url{https://doi.org/10.1007/s00454-017-9912-9}

\bibitem{shyu}
Shyu, T.: Decomposition of complete graphs into paths and stars. Discret. Math.  \textbf{310}(15-16),  2164--2169 (2010)

\bibitem{HazimMichan}
Trao, H.M., Chia, G.L., Ali, N.A., Kilicman, A.: On edge-partitioning of complete geometric graphs into plane trees (2019), \url{https://arxiv.org/abs/1906.05598}

\end{thebibliography}

\end{document}